\newcommand{\nexteq}{\displaybreak[0]\\ &=}
\setlist[enumerate,1]{label={\upshape(\roman*)}}
\newtheorem{thm}{Theorem}
\newtheorem{lem}[thm]{Lemma}
\theoremstyle{definition}
\newtheorem{remark}[thm]{Remark}
\DeclareMathOperator{\rank}{rank}
\DeclareMathOperator{\tr}{Tr}
\newcommand{\C}{\mathbb{C}}
\newcommand{\R}{\mathbb{R}}
\newcommand{\Z}{\mathbb{Z}}
\title{Bordered complex Hadamard matrices and strongly regular graphs}
\author{Takuya Ikuta}
\address{Faculty of Law,
Kobe Gakuin University, Kobe, 650-8586, Japan}
\email{ikuta@law.kobegakuin.ac.jp}
\author{Akihiro Munemasa}
\address{Graduate School of Information Sciences,
Tohoku University, Sendai, 980-8579, Japan}
\email{munemasa@math.is.tohoku.ac.jp}
\thanks{The work of the authors was supported by JSPS KAKENHI grant number JP20K03527.}
\keywords{association scheme, Hadamard matrix, conference graph}
\subjclass[2010]{05E30,05B34}
\begin{document}

\begin{abstract}
We consider bordered complex Hadamard matrices 
whose core is contained in the Bose--Mesner algebra of a strongly regular graph. 
Examples include a complex Hadamard matrix 
whose core is contained in the Bose--Mesner algebra of
a conference graph due to J.~Wallis, F.~Sz\"{o}ll\H{o}si,
and a family of Hadamard matrices
given by S.~N.~Singh and O.~P.~Dubey.
In this paper, we 
prove that there are no other
bordered complex Hadamard matrices 
whose core is contained in the Bose--Mesner algebra of a strongly regular graph. 
\end{abstract}

\maketitle

\section{Introduction}
A complex Hadamard matrix is a square matrix 
$W$ of order $n$ which satisfies $W\overline{W}^\top= nI$ and
all of whose entries are complex numbers of absolute value $1$.
They are the natural generalization of real Hadamard matrices. 
Complex Hadamard matrices appear frequently in various branches of mathematics and 
quantum physics.

We consider the following ``bordered'' matrix of the form:
\begin{equation}\label{eq:W}
W=\begin{pmatrix}
1 & \boldsymbol{e} \\
\boldsymbol{e}^\top & W_1
\end{pmatrix},
\end{equation}
where $\boldsymbol{e}$ is the all $1$'s row vector of size $n$.
The submatrix $W_1$ is said to be the core of $W$.
In this paper, we consider complex Hadamard matrices $W$
of the form \eqref{eq:W} whose core $W_1$ is contained
in the Bose--Mesner algebra of a symmetric $2$-class association scheme.
In \cite{Wallis,FS} J.~Wallis and F.~Sz\"{o}ll\H{o}si constructed a complex Hadamard matrix $W$ 
whose core $W_1$ is contained in the Bose--Mesner algebra of a conference graph.
And, in \cite{Singh-Dubey} S.~N.~Singh and O.~P.~Dubey
constructed a Hadamard matrix $W$ whore core $W_1$ 
is contained in the Bose--Mesner algebra of strongly regular graph with 
$(k,\lambda,\mu)=(2r^2,r^2,r^2)$.
As a natural problem,
assuming $W_1$ is contained in the Bose--Mesner algebra of  a strongly regular graph,
we are interested in whether $W$ is a complex Hadamard matrix or not.

A similar problem has been considered in our earlier papers
(see \cite{MI,MI2,FS} and references therein).
In \cite{MI,MI2}, we considered borderless complex Hadamard matrices 
contained in the Bose--Mesner algebra 
of some association schemes.

Let $X$ be a finite set with $n$ elements, and let
$\mathfrak{X}=(X,\{R_i\}_{i=0}^2)$ be a symmetric $2$-class association scheme 
with the first eigenmatrix $P=(P_{i,j})_{0\leq i, j\leq 2}$:
\begin{equation}\label{eq:d2P}
\begin{pmatrix}
1&k&\ell\\
1&r&-(r+1)\\
1&s&-(s+1)
\end{pmatrix},
\end{equation}
where $r,s\in\R$, $r\geq 0$, and $s\leq-1$.
We let $\mathfrak{A}$ denote the Bose--Mesner algebra
spanned by the adjacency matrices $A_0,A_1,A_2$ of $\mathfrak{X}$.
A strongly regular graph $\Gamma$ with parameters $(k,\lambda,\mu)$ 
is equivalent to $\mathfrak{X}$, via the correspondence 
$R_1$ equal to the set of edges and $R_2$ equal to the set of non-edges.
In this paper, 
by exchanging $R_1$ and $R_2$, we may assume that $r+s\geq-1$
without loss of generality.

Let 
\begin{equation} \label{eq:W1}
W_1=w_0A_0+w_1A_1+w_2A_2\in\mathfrak{A}.
\end{equation}
Suppose that $w_0,w_1,w_2$ are complex numbers of absolute value $1$,
and $w_1\not=w_2$.
Then we have the following.

\begin{thm} \label{thm:main}
Suppose that $r,s\in\R$, $r\geq0$, $s\leq-1$, $r+s\geq-1$, and $w_1\not=w_2$.
Let $W_1$ be the matrix defined in {\rm(\ref{eq:W1})}.
If the matrix $W$ defined by {\rm(\ref{eq:W})} is a complex Hadamard matrix,
then one of the following holds.
\begin{enumerate}
\item\label{ti}
$\Gamma$ has parameter $(k,\lambda,\mu)=(2r^2,r^2,r^2)$, and
$(w_0,w_1,w_2)=(1,-1,1)$.
\item\label{tii}
$\Gamma$ is a conference graph on $2k+1$ vertices, and
\begin{itemize}
\item[{\rm(a)}] $(w_0,w_1,w_2)=(-1,\pm i,\mp i)$, and 
\item[{\rm(b)}] $(w_0,w_1,w_2)=(1,
\frac{-1\pm i\sqrt{k^2-1}}{k},
\frac{-1\mp i\sqrt{k^2-1}}{k})$.
\end{itemize}
\end{enumerate}
Conversely, if  {\rm(i)} or {\rm(ii)} hold, then 
$W$ is a complex Hadamard matrix.
\end{thm}

\begin{remark}\label{rem:2}
Strongly regular graphs having parameters \ref{ti} in Theorem~\ref{thm:main} was considered in \cite{Singh-Dubey}.
The list of strongly regular graphs up to $1,300$ vertices
are given in Brouwer's database \cite{Brouwer}.
According to that,
strongly regular graphs with such a parameter
exist for $r=2,\ldots,10,12,\ldots,16,18$, and 
are unknown for $r=11,17$.

Complex Hadamard matrices having (a) and (b) in Theorem~\ref{thm:main}~\ref{tii}
were considered in \cite{Wallis} and 
\cite[Proposition~3.4.16]{FS}, respectively.
The matrix in (b) of Theorem~\ref{thm:main}~\ref{tii}
is a Butson-type complex Hadamard matrix 
if and only if $k=2$
\cite[Remark~3.4.17]{FS}.
If a conference graph on
$2k+1$ vertices exists, then $k$ must be even. 
A conference graph on $2k+1$ vertices is known to exist
for $k=2,4,\dots,30$ except $k=10,16,28$, for which the 
nonexistence is known. The existence is undecided for
$k=32$.
\end{remark}

\begin{remark}\label{rem:3}
Two complex Hadamard matrices $W$ and $W'$ are said to be
equivalent if
there exist diagonal matrices $D, D'$ with nonzero complex diagonal entries, and permutation matrices $T, T'$, such that
$DWD'=TW'T'$ holds.
In Theorem~\ref{thm:main} (i), both parts (a) and (b)
give two conjugate complex Hadamard matrices of order $n=2k+1$.
The matrices in (a) and (b) are never equivalent. This can be
seen by computing the Haagerup set
$H(W)$ (see \cite{Haagerup}) defined as
\[
H(W)= \left\{\frac{W_{i_1,j_1}W_{i_2,j_2}}{W_{i_1,j_2}W_{i_2,j_1}}
\Bigm| 1\leq i_1,i_2,j_1,j_2\leq n\right\},
\]
which is an invariant for equivalence.
Indeed, for a matrix $W$ in part (a) of
Theorem~\ref{thm:main}~\ref{tii}, 
we have $H(W)\subseteq\{\pm1,\pm i\}$,
while for a matrix $W$ in part (b) of
Theorem~\ref{thm:main}~\ref{tii}, we have $H(W)\cap\{\pm1,\pm i\}=\{1\}$.

As for the two conjugate matrices in part (a) or in part (b),
they are equivalent if the corresponding conference graph
is self-complementary. Otherwise, it is unclear whether
the two conjugate matrices are equivalent or not.
There are non-self-complementary conference graphs of 
order $25$, according to \cite{H}.
\end{remark}

The organization of the paper is as follows.
After giving preliminaries in Section~\ref{2-class}, 
we give an overview on strongly regular graphs in Section~\ref{sec.srg}.
We also prove the ``converse'' part of
Theorem~\ref{thm:main} in Section~\ref{sec.srg}.
It then remains to derive \ref{ti} and \ref{tii} of
Theorem~\ref{thm:main} under the hypotheses of that theorem.
In Section~\ref{valueOfw1}, we give a quadratic equation satisfied by
the real part of $w_1$ (see {\eqref{eq:W1}}), and a necessary 
condition that the real part lies in the interval $[-1,1]$.
In Section~\ref{sec.rs-1}, we consider the special case $r+s=-1$, and
derive Theorem~\ref{thm:main}~(ii). In Section~\ref{sec.6}, we take a closer look
at the properties of the polynomials $L(X),M(X)$ and $S(X)$
which are needed to express the real part of $w_1$.
As a result, we obtain Theorem~\ref{thm:main}~\ref{ti} under the assumption
$r+s=0$. Finally, in Section~\ref{sec:rs-p}, we 
rule out the case $r+s>0$.

All the computer calculations in this paper were performed 
with the help of Magma \cite{magma}.

\section{Preliminaries}\label{2-class}
First we consider a more general situation than the one mentioned
in the Introduction.
Let $(X,\{R_i\}_{i=0}^d)$ be a symmetric $d$-class association scheme 
with the first eigenmatrix $P=(P_{i,j})_{0\leq i, j\leq d}$.
For more general and detailed theory of association schemes, see \cite{BI}.
We let $\mathfrak{A}$ denote the Bose--Mesner algebra
spanned by the adjacency matrices $A_0,A_1,\ldots,A_d$ of $\mathfrak{X}$.
Then the adjacency matrices are expressed as
\begin{equation}\label{eq:pij}
A_j=\sum_{i=0}^d P_{i,j}E_i \quad (j=0,1,\ldots,d),
\end{equation}
where $E_0=\frac{1}{n}J,E_1,\ldots,E_d$ are the primitive idempotents of $\mathfrak{A}$.

Let
\begin{equation}\label{eq:WW1}
W_1=\sum_{j=0}^d w_jA_j\in\mathfrak{A},
\end{equation}
where $w_0,\ldots,w_d$ are complex numbers of absolute value $1$.
Define
\begin{equation}\label{eq:beta}
\beta_i=\sum_{j=0}^d w_jP_{i,j}  \quad (i=0,1,\ldots,d).
\end{equation}
By \eqref{eq:pij}, \eqref{eq:WW1} and \eqref{eq:beta} we have
\begin{equation}\label{eq:WE}
W_1=\sum_{i=0}^d \beta_iE_i.
\end{equation}
Let $X_i$ {\rm($0\leq i\leq d$)} be indeterminates. For $j=1,2,\dots,d$, 
let $e_j$ be the polynomial defined by
\begin{equation}\label{eq:ek}
e_j=\prod_{i=0}^dX_h
 \left(\sum_{i=0}^dP_{j,i}^2+\sum_{0\leq j_1<j_2\leq d}P_{j,j_1}P_{j,j_2}\left(\frac{X_{j_1}}{X_{j_2}}+\frac{X_{j_2}}{X_{j_1}}\right)-(n+1)\right),
\end{equation}
and $e_0$ be the polynomial defined by
\begin{equation}\label{eq:e0}
e_0=1+\sum_{j=0}^dP_{0,j}X_j.
\end{equation}
Then we have the following.

\begin{lem}\label{lem:equiv}
The following statements are equivalent:
\begin{enumerate}
\item\label{equiv1} The matrix $W$ defined by \eqref{eq:W} is a complex Hadamard matrix,
\item\label{equiv2} $\beta_i\overline{\beta_i}=n+1$ for $i=1,\ldots,d$, and $1+\sum_{j=0}^d P_{0,j}w_j=0$,
\item\label{equiv3} $(w_i)_{0\leq i\leq d}$ is a common zero of $e_j$ {\rm($j=0,\ldots,d$)}.
\end{enumerate}
\end{lem}
\begin{proof}
By (\ref{eq:W}) we have
\begin{align*}
W\overline{W}^\top
=&
\begin{pmatrix}
n+1 & \boldsymbol{e}(I+\overline{W_1}^\top) \\
(I+W_1)\boldsymbol{e}^\top & J+W_1\overline{W_1}^\top
\end{pmatrix}. \label{WWt}
\end{align*}
By (\ref{eq:WE}) we have
\begin{equation}\label{WWt2}
W_1\overline{W_1}^\top=\sum_{i=0}^d\beta_i\overline{\beta_i}E_i.
\end{equation}

Suppose that the matrix (\ref{eq:W}) is a complex Hadamard matrix.
Since $W\overline{W}^\top=(n+1)I$,
we have
\begin{align}
  W_1\overline{W_1}^\top&=(n+1)I-J\nonumber\\
  &=E_0+(n+1)\sum_{j=1}^dE_j,\label{eq:3}\\
  (I+W_1)\boldsymbol{e}^\top&=0.\label{eq:4}
\end{align}
Therefore, by (\ref{WWt2}), (\ref{eq:3}), and (\ref{eq:4}),
\ref{equiv1} implies \ref{equiv2}.

To prove the converse, it suffices to show $\beta_0\overline{\beta_0}=1$.
Since $W$ is symmetric, the diagonal entries of $W_1\overline{W_1}^\top$ are
all $n$. Thus
\begin{align*}
n^2&=
\tr W_1\overline{W_1}^\top
\nexteq
\sum_{j=0}^d\beta_j\overline{\beta_j}\tr E_j
&&\text{(by \eqref{WWt2})}
\nexteq
\beta_0\overline{\beta_0}+\sum_{j=1}^d (n+1)\tr E_j
\nexteq
\beta_0\overline{\beta_0}+(n+1)\tr (I-E_0)
\nexteq
\beta_0\overline{\beta_0}+(n+1)(n-1),
\end{align*}
and hence $\beta_0\overline{\beta_0}=1$.

By (\ref{eq:beta}) we have
\[
\beta_i\overline{\beta_i}=
\sum_{j=0}^dP_{i,j}^2
+\sum_{0\leq j_1<j_2\leq d}P_{i,j_1}P_{i,j_2}\left(\frac{w_{j_1}}{w_{j_2}}+\frac{w_{j_2}}{w_{j_1}}\right)
 \]
for $i=1,\dots, d$.
Therefore, the equivalence of \ref{equiv2} and \ref{equiv3} follows.
\end{proof}

The following is analogous to \cite[Proposition 2.2]{Chan}.
\begin{lem}\label{lem:0324-1}
If the matrix $W$ defined by {\rm(\ref{eq:W})} is a complex Hadamard matrtix,
then we have
\[
n+1\leq\left(\sum_{i=0}^d|P_{j,i}|\right)^2 \quad (j=0,1,\ldots, d).
\]
\end{lem}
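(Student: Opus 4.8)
The plan is to exploit the equivalence established in Lemma~\ref{lem:equiv}: if $W$ is a complex Hadamard matrix, then by condition (ii) we have $\beta_k\overline{\beta_k}=n+1$ for each $k=1,\dots,d$. So it suffices to bound $|\beta_k|$ from above in terms of the $|P_{k,j}|$. First I would recall that $\beta_k=\sum_{j=0}^d w_j P_{k,j}$ from \eqref{eq:beta}, with each $w_j$ of absolute value $1$. Applying the triangle inequality directly gives
\begin{equation*}
\sqrt{n+1}=|\beta_k|=\left|\sum_{j=0}^d w_j P_{k,j}\right|\leq \sum_{j=0}^d |w_j|\,|P_{k,j}|=\sum_{j=0}^d |P_{k,j}|,
\end{equation*}
and squaring both sides yields the claim. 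This is essentially the whole argument; the lemma is a soft consequence of Lemma~\ref{lem:equiv}(ii) plus the triangle inequality, so there is no real obstacle — the only thing to be careful about is that the bound holds simultaneously for all $k\in\{1,\dots,d\}$, which it does since the estimate was made for a fixed but arbitrary such $k$.

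If one wanted to stay closer to the polynomial formulation in Lemma~\ref{lem:equiv}(iii), an alternative route is to start from the expansion
\begin{equation*}
\beta_k\overline{\beta_k}=\sum_{j=0}^d P_{k,j}^2+\sum_{0\leq j_1<j_2\leq d}P_{k,j_1}P_{k,j_2}\left(\frac{w_{j_1}}{w_{j_2}}+\frac{w_{j_2}}{w_{j_1}}\right)
\end{equation*}
derived in the proof of Lemma~\ref{lem:equiv}, observe that each factor $\frac{w_{j_1}}{w_{j_2}}+\frac{w_{j_2}}{w_{j_1}}=2\operatorname{Re}(w_{j_1}/w_{j_2})$ has absolute value at most $2$, and conclude $n+1=\beta_k\overline{\beta_k}\leq \sum_j P_{k,j}^2+2\sum_{j_1<j_2}|P_{k,j_1}||P_{k,j_2}|=\left(\sum_j |P_{k,j}|\right)^2$. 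Either way the statement follows immediately. I would present the first version, since it is the cleanest; the reference to \cite[Proposition 2.2]{Chan} is just to acknowledge the analogous estimate there, and no new idea beyond the triangle inequality is needed.
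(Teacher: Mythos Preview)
Your proposal is correct. The paper's own proof follows exactly your second, alternative route: it expands $\beta_k\overline{\beta_k}$ using \eqref{eq:beta}, bounds each cross term via $\left|\frac{w_{j_1}}{w_{j_2}}+\frac{w_{j_2}}{w_{j_1}}\right|\leq 2$, and then recognises the resulting sum as $\left(\sum_j |P_{k,j}|\right)^2$. Your preferred first argument---applying the triangle inequality directly to $|\beta_k|=\sqrt{n+1}$---is a genuinely shorter path that bypasses the quadratic expansion entirely; it buys brevity and transparency, while the paper's version has the minor advantage of making the connection to the polynomials $e_k$ in \eqref{eq:ek} more visible. Either argument is complete as stated.
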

\begin{proof}
By \ref{equiv2} in Lemma~\ref{lem:equiv}, we have
\begin{align*}
n+1&=\beta_j\overline{\beta_j} \\
&=\left(\sum_{j_1=0}^d w_{j_1}P_{j,j_1}\right)\left(\sum_{j_2=0}^d \frac{P_{j,j_2}}{w_{j_2}}\right) \\
&=\sum_{i=0}^dP_{j,i}^2+\sum_{0\leq j_1<j_2\leq d}\left(\frac{w_{j_1}}{w_{j_2}}+\frac{w_{j_2}}{w_{j_1}}\right)P_{j,j_1}P_{j,j_2}.
\end{align*}
for $j=0,1,\ldots,d$.
Since $W$ is a complex Hadamard matrix, we have
$\left|\frac{w_{j_1}}{w_{j_2}}+\frac{w_{j_2}}{w_{j_1}}\right|\leq2$.
Then
\begin{align*}
n+1&\leq\sum_{i=0}^d|P_{j,i}|^2+
\sum_{0\leq j_1<j_2\leq d}\left|\frac{w_{j_1}}{w_{j_2}}+\frac{w_{j_2}}{w_{j_1}}\right||P_{j,j_1}||P_{j,j_2}| \\
&\leq \sum_{i=0}^d|P_{j,i}|^2+2\sum_{0\leq j_1<j_2\leq d}|P_{j,j_1}||P_{j,j_2}| \\
&=\left(\sum_{i=0}^d|P_{j,i}|\right)^2.&\qedhere
\end{align*}
\end{proof}

Let
$f(X)$ be a non-constant polynomial with real coefficients.
Put $f_0(X)=f(X)$ and $f_1(X)=f_0'(X)$.
Define 
\[f_{j+1}(X)=-\text{Rem}(f_{j-1}(X),f_{j}(X))\quad (j=1,2,\ldots)\]
where, for polynomials $a(X)$, $b(X)\ne0$, we denote 
by $\text{Rem}(a(X),b(X))$ the remainder when $a(X)$ is reduced modulo $b(X)$. There exists a positive integer $m$ such that
$f_m(X)\not=0$ and $f_{m+1}(X)=0$.
The sequence of the polynomials
\[
f_0(X), f_1(X), f_2(X),\ldots,f_m(X)
\]
is called the Strum sequence associated to $f(X)$.

Let $c_j$ be the leading coefficient of $f_j(X)$, and $d_j=\deg f_j(X)$ for $j=0,1,\ldots,m$.
Then we have the following sequences:
\begin{align}
& \left(\text{sgn}(c_j)\right)_{j=0}^m, \label{seq:1} \\
& \left(\text{sgn}((-1)^{d_j}c_j)\right)_{j=0}^m. \label{seq:2}
\end{align}

\begin{thm}[Sturm \cite{Sturm}; see also {\cite[Corollary 10.5.4]{Rahman}}]\label{thm:sturm}
With the above notation, 
the number of distinct real roots of $f(X)$ is given by 
the number of sign changes of \eqref{seq:2} minus
the number of sign changes of \eqref{seq:1}.
\end{thm}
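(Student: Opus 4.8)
The plan is to recognize the two sequences (\ref{seq:1}) and (\ref{seq:2}) as the sign patterns of the Sturm sequence $p_0,\dots,p_m$ evaluated at $+\infty$ and $-\infty$, and then to reduce the theorem to the classical local statement of Sturm. Concretely, for real $\xi$ write $V(\xi)$ for the number of sign changes of the sequence $(p_0(\xi),\dots,p_m(\xi))$ after deleting zeros. Since $p_j(X)\sim c_jX^{d_j}$, one has $\operatorname{sgn}p_j(\xi)\to\operatorname{sgn}(c_j)$ as $\xi\to+\infty$ and $\operatorname{sgn}p_j(\xi)\to\operatorname{sgn}((-1)^{d_j}c_j)$ as $\xi\to-\infty$; hence for $a$ sufficiently small and $b$ sufficiently large, $V(b)$ equals the number of sign changes of (\ref{seq:1}) and $V(a)$ equals that of (\ref{seq:2}). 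It therefore suffices to prove that, for $a<b$ that are not roots of $p$, the number of distinct real roots of $p$ in $(a,b]$ equals $V(a)-V(b)$, and then let $a\to-\infty$, $b\to+\infty$.

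The first step is to record the structural properties of the sequence. From $\gcd(p_{j-1},p_j)=\gcd(p_j,p_{j+1})$ (the defining recurrence changes the remainder only by sign) together with $p_{m+1}=0$, one gets $p_m\doteq\gcd(p_0,p_1)=\gcd(p,p')$ up to a nonzero constant. Writing $p_{j-1}=a_jp_j-p_{j+1}$ for the quotient $a_j$, a common real root of $p_j$ and $p_{j+1}$ propagates downward to a common root of $p_0=p$ and $p_1=p'$, i.e. a repeated root of $p$, which is exactly a root of $p_m$; conversely every root of $p_m$ is a common root of all $p_j$. To dispose of repeated roots I would pass to the reduced sequence $q_j=p_j/p_m$. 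Dividing all terms by the single function $p_m(\xi)$ multiplies every entry by the common sign $\operatorname{sgn}p_m(\xi)$ and so leaves $V(\xi)$ unchanged wherever $p_m(\xi)\ne0$; moreover $q_m$ is a nonzero constant, consecutive $q_j$ are now coprime, $q_0=p/\gcd(p,p')$ is squarefree with the same distinct real roots as $p$, and a short local computation shows that at each root $\xi_0$ of $q_0$ one has $\operatorname{sgn}q_1(\xi_0)=\operatorname{sgn}q_0'(\xi_0)$ (the factor relating them is the repeated-root multiplicity, which is positive). Choosing $a,b$ to avoid the finitely many roots of $p_m$, it is then enough to track $V$ for the reduced sequence.

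Next I would analyze how $V$ changes as $\xi$ increases through a root $\xi_0$ of one of the $q_j$. If $1\le j\le m-1$, then $q_{j-1}(\xi_0)$ and $q_{j+1}(\xi_0)$ are nonzero and the relation $q_{j-1}=a_jq_j-q_{j+1}$ forces $q_{j-1}(\xi_0)=-q_{j+1}(\xi_0)$, so the outer two entries keep fixed opposite signs across $\xi_0$; a block of the form $(+,\ast,-)$ or $(-,\ast,+)$ contributes exactly one sign change regardless of the middle sign, whence $V$ is unchanged. If $j=0$, then $\xi_0$ is a simple root of $q_0$ and, by the sign identity above, $q_1$ plays the role of the derivative: just below $\xi_0$ the pair $(q_0,q_1)$ has opposite signs (one change) and just above it has equal signs (no change), while the rest of the sequence is locally constant; hence $V$ drops by exactly $1$. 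Summing the contributions over $(a,b]$, the quantity $V(a)-V(b)$ counts precisely the distinct real roots of $q_0$, equivalently of $p$, in that interval.

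I expect the main obstacle to be the treatment of repeated roots, that is, making the passage to the reduced sequence $q_j=p_j/p_m$ fully rigorous: one must justify that dividing by the common factor does not alter the count of sign changes, verify that the $q_j$ inherit the recurrence and the coprimality of consecutive terms, and above all check the sign identity $\operatorname{sgn}q_1(\xi_0)=\operatorname{sgn}q_0'(\xi_0)$ at roots of the squarefree part, since $q_1$ is no longer literally the derivative of $q_0$. Once these points are settled, the limits $a\to-\infty$ and $b\to+\infty$ convert the local count into the asserted formula in terms of (\ref{seq:1}) and (\ref{seq:2}).
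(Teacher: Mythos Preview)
The paper does not prove this statement at all: it is quoted as a classical theorem with citations to Sturm and to \cite[Corollary 10.5.4]{Rahman}, and used later as a black box. Your proposal, by contrast, sketches the standard textbook proof of Sturm's theorem---identify the sequences (\ref{seq:1}) and (\ref{seq:2}) as the limits of $V(\xi)$ at $\pm\infty$, reduce to the squarefree case via $q_j=p_j/p_m$, and track how $V$ changes across roots of internal and leading terms. That outline is essentially correct and matches the arguments one finds in the cited reference; the only places requiring care are the ones you flag yourself (behavior at repeated roots and the sign relation between $q_0'$ and $q_1$). So there is no discrepancy in approach to report, since the paper offers none; your write-up is simply supplying a proof where the paper opts for a citation.
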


\section{Strongly regular graphs}\label{sec.srg}
In this section, we review basic properties of
symmetric $2$-class association schemes and 
strongly regular graphs.
Let $\mathfrak{X}=(X,\{R_i\}_{i=0}^2)$ be a symmetric $2$-class association scheme
with the first eigenmatrix (\ref{eq:d2P}).
We have the following three cases in (\ref{eq:d2P}): (i) $r+s\geq0$, (ii) $r+s=-1$, (iii) $r+s\leq-2$.
Suppose that (iii) holds.
Then the eigenvalues of $R_2$ satisfy $-(r+1)-(s+1)\geq0$.
By exchanging $R_1$ and $R_2$, 
we may assume that $r+s\geq-1$ without loss of generality.
Therefore we only consider the two cases (i) and (ii).
Under this assumption, we have
\begin{equation}\label{k2}
\ell\geq2.
\end{equation}
Indeed, if $\ell=1$, then $R_2$ is a matching, and hence the eigenvalues
satisfy $-r-1=-1$ and $-s-1=1$. This implies $r+s=-2$, contrary to our
assumption.

A strongly regular graph $\Gamma$ with parameters $(k,\lambda,\mu)$ 
is equivalent to $\mathfrak{X}$, via the correspondence 
$R_1$ equal to the set of edges and $R_2$ equal to the set of non-edges.
The complement of a strongly regular graph is also a strongly regular graph.
Then we have
\begin{align}
\mu&=k+rs, \label{mu}\\
\lambda&=r+s+\mu, \label{lambda}\\
\ell\mu&=k(k-\lambda-1),\label{eqk2}\\
n\mu&=(1+k)\mu+\ell(k-\lambda-1). \nonumber
\end{align}
Let $m_j=\rank E_j$ for $j=1,2$.
Then we have
\begin{align}
m_1&=\frac12(n-1-\frac{2k+(n-1)(\lambda-\mu)}{\sqrt{(\lambda-\mu)^2+4(k-\mu)}}), \label{m1} \\
m_2&=\frac12(n-1+\frac{2k+(n-1)(\lambda-\mu)}{\sqrt{(\lambda-\mu)^2+4(k-\mu)}}). \nonumber
\end{align}

A conference graph is a strongly regular graph $\Gamma$
satisfying one of the following two equivalent conditions:
\begin{itemize}
\item[(i)] $k=2r(r+1)$, $r+s=-1$,
\item[(ii)] $m_1=m_2$.
\end{itemize}
We remark that the eigenvalues $r,s$ of a strongly regular graph $\Gamma$
are integers unless
$\Gamma$ is a conference graph.
If $\Gamma$ is a conference graph, then $r=\frac{-1+\sqrt{2k+1}}{2}$ and $s=\frac{-1-\sqrt{2k+1}}{2}$. In any case, 
\begin{equation}\label{rsZ}
rs\in\Z.
\end{equation}
Note that we allow disconnected strongly regular graphs.
These graphs are characterized by $s=-1$, or equivalently,
$\mu=0$. 

By \eqref{eq:d2P}, \eqref{eq:ek}, and \eqref{eq:e0} we have
\begin{align}
e_0=&1+X_0+kX_1+\ell X_2, \label{eq0}\\
e_1=&-\left((r+1)X_1-rX_2\right)X_0^2
        -\left( r(r+1)(X_1-X_2)^2+(k+\ell)X_1X_2\right)X_0 \nonumber \\
        &+(rX_1-(r+1)X_2)X_1X_2,  \label{eq1} \\
e_2=&-\left((s+1)X_1-sX_2\right)X_0^2
        -\left(s(s+1)(X_1-X_2)^2+(k+\ell)X_1X_2\right)X_0 \nonumber \\
        &+(sX_1-(s+1)X_2)X_1X_2.  \label{eq3}
\end{align}
Let $\mathcal{I}$ be the ideal of the polynomial ring $\mathscr{R}=\mathbb{C}[X_0,X_1,X_2]$ generated by
(\ref{eq0}), (\ref{eq1}), and (\ref{eq3}).

\begin{lem}\label{lem:0918}
Let $W_1$ be the matrix defined by {\rm(\ref{eq:W1})},
and let $W$ be the matrix defined by {\rm(\ref{eq:W})}.
Then $W$ is a complex Hadamard matrix if and only if
$(w_0,w_1,w_2)$ is a common zero of the polynomials $e_k$ {\rm($k=0,1,2$)}.
\end{lem}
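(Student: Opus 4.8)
The proof of Lemma~\ref{lem:0918} is essentially a specialization of Lemma~\ref{lem:equiv} to the case $d=2$, so the plan is to verify that the hypotheses and conclusions match up. First I would observe that the matrix $W_1$ in \eqref{eq:W1} is exactly the matrix in \eqref{eq:WW1} with $d=2$, and that the assumptions on $w_0,w_1,w_2$ (complex numbers of absolute value~$1$) coincide; the extra hypothesis $w_1\neq w_2$ present in Theorem~\ref{thm:main} is not needed here, so I would simply not invoke it. Then Lemma~\ref{lem:equiv} already gives the equivalence of ``$W$ is a complex Hadamard matrix'' with ``$(w_j)_{0\leq j\leq 2}$ is a common zero of the polynomials $e_k$ ($k=0,1,2$).''

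The only genuine content to check is that the polynomials $e_0,e_1,e_2$ appearing in the statement of Lemma~\ref{lem:0918}, namely the ones made explicit in \eqref{eq0}, \eqref{eq1}, \eqref{eq3}, really are the specializations to $d=2$ of the general polynomials defined by \eqref{eq:ek} and \eqref{eq:e0}, with the first eigenmatrix taken to be \eqref{eq:d2P}. For $e_0$ this is immediate from \eqref{eq:e0} once one reads off $k_0=1$, $k_1$, $k_2$ from \eqref{eq:d2P}. For $e_1$ and $e_2$ I would substitute the second row $(1,r,-(r+1))$ and the third row $(1,s,-(s+1))$ of \eqref{eq:d2P} into \eqref{eq:ek} with $d=2$, clear the denominators $X_0X_1X_2$, and use the relation $k_1+k_2=n-1$ (which holds since the row sums of $P$ give $1+k_1+k_2=n$) to recognize the coefficient $n+1$ appearing in \eqref{eq:ek}; a short rearrangement then yields exactly \eqref{eq1} and \eqref{eq3}. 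This is the routine algebraic identification that the text has in fact already carried out in the lines preceding \eqref{eq0}–\eqref{eq3}.

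Since that identification has been recorded in the paragraph containing \eqref{eq0}, \eqref{eq1}, \eqref{eq3}, the proof reduces to a one-line citation. Concretely, I would write: the polynomials $e_0,e_1,e_2$ in \eqref{eq0}, \eqref{eq1}, \eqref{eq3} are the polynomials $e_k$ of \eqref{eq:ek}–\eqref{eq:e0} for the association scheme with first eigenmatrix \eqref{eq:d2P}, and hence the assertion is the equivalence of (i) and (iii) in Lemma~\ref{lem:equiv} with $d=2$. The main obstacle, such as it is, is purely bookkeeping: making sure that the sign conventions and the clearing of denominators in passing from \eqref{eq:ek} to \eqref{eq1}–\eqref{eq3} are consistent, so that ``common zero of $e_k$'' means the same thing in both places. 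There is no analytic or combinatorial difficulty here; the lemma is a convenience restatement tailored to the $2$-class setting that will be used repeatedly in Section~\ref{sec.srg} and beyond.
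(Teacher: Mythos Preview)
Your proposal is correct and matches the paper's own proof, which simply reads ``This follows easily from Lemma~\ref{lem:equiv} by setting $d=2$.'' You have fleshed out the bookkeeping (identifying \eqref{eq0}--\eqref{eq3} with the specializations of \eqref{eq:ek}--\eqref{eq:e0}) that the paper leaves implicit, but the approach is identical.
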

\begin{proof}
This follows easily from Lemma~\ref{lem:equiv}
by setting $d=2$.
\end{proof}

\begin{proof}[Proof of the ``converse'' part of 
Theorem~\ref{thm:main}]
Assume that the matrix $W$ is one of the matrices 
\ref{ti}, \ref{tii} in Theorem~\ref{thm:main}.
In view of Lemma~\ref{lem:0918},
it suffices to show that
$(w_0,w_1,w_2)$ is a common zero of
the polynomials \eqref{eq0}, \eqref{eq1}, and \eqref{eq3}.
This can be done by direct calculation.
\end{proof}

\begin{lem}\label{lem:rEq0}
Let $W_1$ be the matrix defined by \eqref{eq:W1},
and let $W$ be the matrix defined by \eqref{eq:W}.
If $W$ is a complex Hadamard matrix, then we have the following:
\begin{enumerate}
\item\label{rEq0_1} $s<-1$,
\item\label{rEq0_2} $n+1\leq4s^2$.
\end{enumerate}
\end{lem}
\begin{proof}
\ref{rEq0_1} Suppose that $s=-1$.
Then, since the graph $(X,R_1)$ is the union of complete graphs,
we have $k=r$.
We can verify that $\mathcal{I}$ contains $X_2(\ell X_2+1)^2(X_2^2+(k+\ell)X_2+1)$.
By Lemma~\ref{lem:0918},
$(w_0,w_1,w_2)$ is a common zero of
the polynomials (\ref{eq0}), (\ref{eq1}), and (\ref{eq3}) in $\mathcal{I}$.
From this, $\ell w_2+1=0$ or $w_2^2+(k+\ell)w_2+1=0$.
By \eqref{k2}, we have $\ell w_2+1\ne0$.
Since $|w_2^2+1|<3\leq k+\ell=|(k+\ell)w_2|$, 
we have $w_2^2+(k+\ell)w_2+1\ne0$.
This is a contradiction.

\ref{rEq0_2} Applying Lemma~\ref{lem:0324-1} for $j=2$, we have
\begin{align*}
n+1&\leq \left(\sum_{i=0}^2|P_{2,i}|\right)^2 \\
&=(1+(-s)-(s+1))^2 \\
&=4s^2.&\qedhere
\end{align*}
\end{proof}

If $s\neq-1$, then
we have $\mu>0$ by \eqref{mu}, \eqref{lambda}, and \eqref{eqk2}.
Thus
\begin{equation}\label{mu0}
\ell=\frac{-k(r+1)(s+1)}{k+rs}.
\end{equation}

\begin{remark}\label{rem:9}
Applying Lemma~\ref{lem:0324-1} for $j=1$, we have 
$n+1\leq4(r+1)^2$. This inequality is weaker than the one
stated in \ref{rEq0_2} of Lemma~\ref{lem:rEq0}. Indeed, since
we assumed that $r+s\geq-1$ in the beginning of this section, 
we have $(r+1)^2\geq s^2$.
\end{remark}

\section{The real part of $w_1$}\label{valueOfw1}
We suppose that $r,s\in\R$, $r\geq0$, $s<-1$, and $r+s\geq-1$.
Let $W_1$  be the matrix defined by (\ref{eq:W1}), and
$W$ be the matrix defined by (\ref{eq:W}).
We suppose that 
the matrix $W$ is a complex Hadamard matrix.
Let
\[
w_j=a_j+b_ji
\]
for $j=0,1,2$, where $a_j,b_j\in\R$, $a_j^2+b_j^2=1$, and $i^2=-1$.
Recall that $\mathcal{I}$ is the ideal of $\mathscr{R}$ 
generated by
(\ref{eq0}), (\ref{eq1}), and (\ref{eq3}),
and $(w_0,w_1,w_2)$ is a common zero of $\mathcal{I}$
by Lemma~\ref{lem:0918}. Since we assume $w_1\neq w_2\ne0$,
in Theorem~\ref{thm:main}, we consider the ideal
$\tilde{\mathcal{I}}$ of the polynomial ring 
$\tilde{\mathscr{R}}=\C[X_\infty,X_0,X_1,X_2]$ 
generated by
(\ref{eq0}), (\ref{eq1}), (\ref{eq3}),
and 
\[e_\infty=1+X_\infty(r-s)(X_1-X_2)X_2.\]
Note that we have included the factor $r-s$ for a technical reason.
In computer implementation, we regard $r$ and $s$ as indeterminates as well, 
but $r$ and $s$ are assumed to take distinct values.

Define the polynomials $L(X)$, $M(X)$, and $S(X)$ as follows:
\begin{align}
L(X)&=X^3+\frac{4rs-r-s+3}{2}X^2+\frac{-4rs(r+s-1)+1}{2}X \nonumber\\
&\quad +\frac{rs(r^2+2(3s+1)r+s^2+2s+2)}{2}, \label{eq:1224-1} \displaybreak[0]\\
M(X)&=L(X)-4(X+rs)^2, \label{eq:1224-2} \displaybreak[0]\\
S(X)&=s_4X^4+s_3X^3+s_2X^2+s_1X+s_0, \label{eq:1224-3}
\end{align}
where
\begin{align*}
s_4&=(r+s+1)^2,\\
s_3&=4sr^3+8s(s+1)r^2+(4s^3+8s^2+8s+2)r+2s+2,\\
s_2&=2s(2s-1)r^4+2s(s+1)(4s-3)r^3+2s(2s^3+s^2+6s+4)r^2 \\
 &\quad -2s(s+1)(s^2+2s-6)r+1,\\
s_1&=-2rs(2sr^4+6s(s+1)r^3+(6s^3-4s^2-8s-1)r^2 \\
 &\quad +2(s+1)(s^3+2s^2-6s-1)r-s^2-2s-2),\\
s_0&=r^2s^2(r^4+4(s+1)r^3+(22s^2+28s+8)r^2 \\
 &\quad +4(s+1)(s^2+6s+2)r+(s^2+2s+2)^2).
\end{align*}

\begin{lem}\label{lem:a1}
We have
\begin{align}
(L(k)-M(k))^2a_1^2+2(L(k)^2-M(k)^2)a_1+(L(k)+M(k))^2-S(k)&=0, \label{eq:0115}\\
2(k+rs)^2rsa_0-2k^2(k+rs)^2a_1+h_0&=0, \label{eq:0804} \\
2(k+rs)^3a_1-2(k+rs)r(r+1)s(s+1)a_2+\ell_0&=0, \label{eq:0805}
\end{align}
where
\begin{align*}
h_0&=-k^5+(r+s-2rs+1)k^4+3rs(r+s+1)k^3 \\
 &\quad -rs((r+s)^2+2(r+s)-1)k^2+4r^2s^2k+2r^3s^3,\\
\ell_0&=k(k-r-s-1)(k^2+2rsk-rs(r+s+1)).
\end{align*}
\end{lem}
\begin{proof}
With the help of Magma,
we can verify that $\tilde{\mathcal{I}}$ contains
$f_1(X_0,X_1,X_2)$ and 
$f_2(X_0,X_1,X_2)$,
where
\begin{align*}
f_1(X_0,X_1,X_2)&=X_0^2+(r+s+1)(X_1-X_2)X_0-X_1X_2, \\
f_2(X_0,X_1,X_2)&=X_1^3X_2^2-X_0X_1(X_1^2+X_2^2)+X_2(X_0^2+X_1^2-X_1X_2) \\
 &\quad +(r+s+1)X_2(X_1-X_2)(X_0-X_1X_2) \\
 &\quad +rsX_1(X_1+X_2)(X_1-X_2)^2.
\end{align*}
By Lemma~\ref{lem:0918}, we have $f_1(w_0,w_1,w_2)=0$ and $f_2(w_0,w_1,w_2)=0$.

Consider the polynomial ring 
\[
\mathscr{P}=\mathbb{C}[Y_\infty,\alpha_0,\alpha_1,\alpha_2,\beta_0,\beta_1,\beta_2].
\]
Let $\chi$ be the homomorphism from $\tilde{\mathscr{R}}$ to $\mathscr{P}$ defined by
$\chi(X_\infty)=Y_\infty$ and 
$\chi(X_j)=\alpha_j+\beta_ji$ for $j=0,1,2$.
Let $\mathcal{J}$ denote the ideal of the polynomial ring $\mathscr{P}$  generated by 
$\chi(\tilde{\mathcal{I}})$, $\chi(f_1(X_0,X_1,X_2))$, $\chi(f_2(X_0,X_1,X_2))$
and $\alpha_j^2+\beta_j^2-1$ for $j=0,1,2$.
With the help of Magma,
we can verify that $\mathcal{J}$ contains
\begin{align*}
& (L(k)-M(k))^2\alpha_1^2+2(L(k)^2-M(k)^2)\alpha_1+(L(k)+M(k))^2-S(k),\\
& 2(k+rs)^2rs\alpha_0-2k^2(k+rs)^2\alpha_1+h_0,\\
& 2(k+rs)^3\alpha_1-2(k+rs)r(r+1)s(s+1)\alpha_2+\ell_0.
\end{align*}
Therefore we have the assertion.
\end{proof}

\begin{lem}\label{lem:0729}
If the real part of $w_1$ 
is in the interval $[-1,1]$,
then the following holds:
\begin{enumerate}
\item\label{0729i} $S(k)\geq0$,
\item\label{0729ii} $M(k)\leq \frac{\sqrt{S(k)}}{2}\leq L(k) \quad\text{or}\quad M(k)\leq \frac{-\sqrt{S(k)}}{2}\leq L(k)$.
\end{enumerate}
\end{lem}
\begin{proof}
By \ref{rEq0_1} in Lemma~\ref{lem:rEq0}
and (\ref{eq:1224-2}) we have $L(k)-M(k)\ne0$.
Assume that $a_1\in[-1,1]$.
Then by (\ref{eq:0115}),
using the notation of (\ref{eq:1224-1}), (\ref{eq:1224-2}), and (\ref{eq:1224-3}),
we have
\[
 a_1=\frac{-L(k)-M(k)\pm\sqrt{S(k)}}{L(k)-M(k)}.
\]
Since $a_1\in\R$, we have \ref{0729i}.
Since $a_1\in[-1,1]$, we have \ref{0729ii}.
\end{proof}

\section{Properties of the polynomials $L(X)$, $M(X)$, and $S(X)$ for the case $r+s=-1$} \label{sec.rs-1}
In this section, 
we suppose that $r+s=-1$,
$2r(r+1)\in\Z$, 
and $(r,s)\ne(0,-1)$. 
We consider properties of the polynomials
(\ref{eq:1224-1}), (\ref{eq:1224-2}), and (\ref{eq:1224-3}).
By (\ref{eq:1224-1}), (\ref{eq:1224-2}), and (\ref{eq:1224-3}) we have
\begin{align}
L(X)&=X^3-2(r^2+r-1)X^2-\frac{8r^2+8r-1}{2}X+\frac{r(r+1)(4r^2+4r-1)}{2}, \label{eq:LL}\\
M(X)&=L(X)-4(X-r(r+1))^2, \label{eq:MM}\\
S(X)&=-(X-r(r+1))S_1(X),\label{eq:SS}
\end{align}
where
\begin{align}
S_1(X)&=4r(r+1)(X-s_+)(X-s_-),\label{eq:S1}\\
s_{\pm}&=2r(r+1)+
\frac{1\pm\sqrt{16r^2(r+1)^2+1}}{8r(r+1)}.\label{eq:Spm}
\end{align}

\begin{lem} \label{lem:0512}
We have $r(r+1)<s_{\pm}$.
\end{lem}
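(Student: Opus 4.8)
The plan is to show $r(r+1) < s_-$, which suffices since $s_- \le s_+$. From the definition \eqref{eq:Spm}, we have
\[
s_- - r(r+1) = r(r+1) + \frac{1 - \sqrt{16r^2(r+1)^2+1}}{8r(r+1)},
\]
so the claim is equivalent to
\[
r(r+1) + \frac{1 - \sqrt{16r^2(r+1)^2+1}}{8r(r+1)} > 0.
\]
Note first that under the standing hypotheses of Section~\ref{sec.rs-1} we have $r \ge 0$ and $(r,s)\ne(0,-1)$ with $s = -1-r$, which forces $r > 0$; hence $r(r+1) > 0$ and we may multiply through by $8r(r+1) > 0$ without reversing the inequality. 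The claim becomes
\[
8r^2(r+1)^2 + 1 > \sqrt{16r^2(r+1)^2+1}.
\]

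Both sides are positive, so I would square them. Writing $u = r^2(r+1)^2 > 0$ for brevity, the inequality to prove is $(8u+1)^2 > 16u + 1$, i.e.
\[
64u^2 + 16u + 1 > 16u + 1,
\]
which reduces to $64u^2 > 0$. This holds since $u > 0$. Unwinding the chain of equivalences (each step was reversible because every quantity involved was positive) gives $r(r+1) < s_-  \le s_+$, completing the proof.

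There is essentially no obstacle here: the only point requiring a moment's care is confirming that $r > 0$ strictly (so that the denominators $8r(r+1)$ are nonzero and positive), which follows from the section's hypothesis $(r,s)\ne(0,-1)$ together with $r+s=-1$ and $r\ge 0$. Everything else is a short squaring argument. One could alternatively observe directly that $\sqrt{16r^2(r+1)^2+1} < 4r(r+1) + 1 \le 8r^2(r+1)^2 + 1$, using $\sqrt{a^2+1} < a+1$ for $a = 4r(r+1) > 0$ and then $4r(r+1) \le 8r^2(r+1)^2$ since $r(r+1)\ge 1$ is not guaranteed — so the clean route is the squaring computation above rather than this crude bound.
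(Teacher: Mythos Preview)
Your proof is correct and follows essentially the same route as the paper: both reduce the claim $r(r+1)<s_-$ to the inequality $\sqrt{16r^2(r+1)^2+1}<8r^2(r+1)^2+1$, then square and observe that the difference is $64r^4(r+1)^4>0$. Your added justification that $r>0$ (so the denominator $8r(r+1)$ is positive) is a welcome detail the paper leaves implicit, and your closing parenthetical about the cruder bound is unnecessary but harmless.
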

\begin{proof}
Since $s_-<s_+$ by (\ref{eq:Spm}),
we show that $r(r+1)<s_-$.
To do this, we have only to show that
$\sqrt{16r^2(r+1)^2+1}<8r^2(r+1)^2+1$ by (\ref{eq:Spm}).
Since $(8r^2(r+1)^2+1)^2-(16r^2(r+1)^2+1)=64r^4(r+1)^4>0$,
we have the assertion.
\end{proof}

\begin{lem} \label{lem:S1}
Suppose that $r(r+1)<x$.
Then $S(x)\geq0$ if and only if $s_-\leq x\leq s_+$.
\end{lem}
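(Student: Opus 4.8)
The plan is to reduce the statement to an elementary sign analysis of the three linear factors appearing in the factorization of $S(X)$ already recorded in \eqref{eq:SS} and \eqref{eq:S1}, so that essentially no new computation is needed.

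First I would record the harmless observation that, under the standing hypotheses of this section, $r(r+1)>0$: we have $r\ge0$, and if $r=0$ then $r+s=-1$ would force $s=-1$, contradicting $(r,s)\ne(0,-1)$; so $r>0$. (This is also implicit in \eqref{eq:Spm}, where one divides by $8r(r+1)$.) Next, combining \eqref{eq:SS} with \eqref{eq:S1} gives
\[
S(X)=-4r(r+1)\,(X-r(r+1))(X-s_+)(X-s_-).
\]
Now fix $x$ with $r(r+1)<x$. Then $-4r(r+1)<0$ and $x-r(r+1)>0$, so the product $-4r(r+1)(x-r(r+1))$ of the first two factors is strictly negative. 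Hence
\[
S(x)\ge0\iff (x-s_+)(x-s_-)\le0.
\]

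Finally, since $s_-<s_+$ — which is immediate from \eqref{eq:Spm}, and is in any case consistent with Lemma~\ref{lem:0512} — the quadratic $(X-s_+)(X-s_-)$ is nonpositive exactly on the closed interval $[s_-,s_+]$, so $(x-s_+)(x-s_-)\le0\iff s_-\le x\le s_+$. Chaining the two equivalences yields the claim. I do not expect any genuine obstacle: the entire content is the factorization established earlier, and the only points requiring (minimal) care are the sign bookkeeping for the leading coefficient $-4r(r+1)$ and the observation that $r(r+1)>0$.
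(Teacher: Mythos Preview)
Your proof is correct and follows essentially the same route as the paper: both reduce the claim to a sign analysis using the factorization \eqref{eq:SS}--\eqref{eq:S1}. A small difference is that the paper's one-line argument also invokes Lemma~\ref{lem:0512} (that $r(r+1)<s_\pm$), whereas your analysis shows that the weaker observation $s_-<s_+$, immediate from \eqref{eq:Spm}, already suffices.
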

\begin{proof}
This follows easily from (\ref{eq:SS}), (\ref{eq:S1}), and Lemma~\ref{lem:0512}.
\end{proof}

\begin{lem}\label{lem:con}
We have $\Z\cap\{x\mid s_-\leq x\leq s_+\}=\{2r(r+1)\}$.
\end{lem}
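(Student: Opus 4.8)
The plan is to eliminate $r,s$ in favour of the single integer $t:=2r(r+1)$. In the regime $r+s=-1$ with $r\ge0$ and $(r,s)\ne(0,-1)$ we have $r>0$, so $t$ is a \emph{positive} integer by hypothesis, hence $t\ge1$. Since $16r^2(r+1)^2=4t^2$ and $8r(r+1)=4t$, formula \eqref{eq:Spm} becomes
\[
s_{\pm}=t+\frac{1\pm\sqrt{4t^2+1}}{4t},
\]
so $[s_-,s_+]$ is an interval around $t$ whose width is governed by $\sqrt{4t^2+1}$.

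Next I would establish the chain of inequalities
\[
t-\tfrac12<s_-<t<s_+<t+1
\]
valid for every integer $t\ge1$. Each one is a one-line radical estimate after squaring: $s_-<t$ and $s_+>t$ hold because $\sqrt{4t^2+1}>1$; the inequality $s_->t-\tfrac12$ is equivalent to $\sqrt{4t^2+1}<2t+1$, i.e.\ to $0<4t$; and $s_+<t+1$ is equivalent to $\sqrt{4t^2+1}<4t-1$, i.e.\ to $0<12t^2-8t=4t(3t-2)$, which holds since $t\ge1$.

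Finally, $t-\tfrac12<s_-$ and $s_+<t+1$ give $[s_-,s_+]\subseteq(t-1,t+1)$, an open interval of length $2$ containing the unique integer $t$, and $t\in[s_-,s_+]$ because $s_-<t<s_+$; hence $\Z\cap\{x\mid s_-\le x\le s_+\}=\{t\}=\{2r(r+1)\}$.

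The argument is elementary; the only point worth flagging is that $s_+<t+1$ genuinely requires $t>\tfrac23$, so integrality of $2r(r+1)$ (a positive integer is $\ge1$) is doing real work here --- for $0<t<1$ the right endpoint $s_+$ can overshoot $t+1$. A more roundabout alternative would use Lemma~\ref{lem:0512} to discard integers $\le r(r+1)$ together with a sign analysis of $S_1$ from Lemma~\ref{lem:S1} at consecutive integers, but the direct endpoint computation above is shorter and self-contained.
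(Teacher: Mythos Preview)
Your proof is correct and follows essentially the same strategy as the paper: show that $[s_-,s_+]\subset(2r(r+1)-1,\,2r(r+1)+1)$ and that $2r(r+1)$ lies in $[s_-,s_+]$, by squaring the relevant radical inequalities. Your substitution $t=2r(r+1)$ streamlines the algebra (the paper works directly with $8r(r+1)\pm1$ and verifies $(8r(r+1)-1)^2-(16r^2(r+1)^2+1)=16r(r+1)(3r(r+1)-1)>0$), and you prove the slightly sharper left bound $s_->t-\tfrac12$ instead of $s_->t-1$, but the underlying argument and the use of the integrality hypothesis to force $t\ge1$ (equivalently $r(r+1)\ge1$ in the paper, via $rs\in\Z$) are the same.
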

\begin{proof}
Note that $r(r+1)\in\Z$ by \eqref{rsZ}.
It is easy to show that $s_-<2r(r+1)<s_+$ by (\ref{eq:Spm}).
Thus, it is enough to show that $2r(r+1)-1<s_-$ and $s_+<2r(r+1)+1$,
or equivalently, $\sqrt{16r^2(r+1)^2+1}<8r(r+1)\pm1$.
We have only to show that $\sqrt{16r^2(r+1)^2+1}<8r(r+1)-1$.
Since
\[
(8r(r+1)-1)^2-(16r^2(r+1)^2+1)=16r(r+1)(3r(r+1)-1)>0,
\]
the result holds.
\end{proof}

\begin{lem}\label{lem:0513-1}
Suppose that $z\in\Z$ and $r(r+1)<z$.
Then $M(z)\leq\frac{\sqrt{S(z)}}{2}\leq L(z)$ or $M(z)\leq\frac{-\sqrt{S(z)}}{2}\leq L(z)$ holds
if and only if $z=2r(r+1)$.
\end{lem}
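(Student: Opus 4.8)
The plan is to reduce the two-sided inequalities to a single polynomial condition and then use the factorizations in~(\ref{eq:1208-6}) and~(\ref{eq:1208-7}). Since the hypothesis is $r(r+1)<z$, Lemma~\ref{lem:0512} together with~(\ref{eq:SS}) shows $S(z)$ has a definite sign pattern governed by the interval $[s_-,s_+]$; in particular, for the chain $M(z)\le\pm\frac{\sqrt{S(z)}}{2}\le L(z)$ to make sense we need $S(z)\ge0$, which by Lemma~\ref{lem:S1} forces $s_-\le z\le s_+$, and then Lemma~\ref{lem:con} forces $z=2r(r+1)$. So one direction, and the bulk of the ``only if'', is essentially immediate from the lemmas already proved. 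The remaining work is the ``if'' direction: verify that when $z=2r(r+1)$ the chain actually holds for one of the two sign choices.

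For the ``if'' direction I would substitute $z=2r(r+1)$ directly. Note $2r(r+1)=-rs$ under the assumption $r+s=-1$, so Lemma~\ref{lem:LMS-rs} and Lemma~\ref{lem:LMS-rs-2} apply verbatim: we get $L(-rs)=M(-rs)<0$, $\sqrt{S(-rs)}=r(r+1)s(s+1)(r+s+1)=0$ since $r+s+1=0$, and more sharply $\frac{\sqrt{S(-rs)}}{2}<|L(-rs)|=|M(-rs)|$. With $r+s=-1$ the value $\sqrt{S(-rs)}$ is actually $0$, so both $\pm\frac{\sqrt{S(z)}}{2}$ collapse to $0$, and the chain reduces to $M(-rs)\le 0\le L(-rs)$. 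But $L(-rs)=M(-rs)<0$ by Lemma~\ref{lem:LMS-rs}(i), which would make $0\le L(-rs)$ false. This signals that I have the sign of $S$ or the precise form of the chain slightly off, and the real content is to track signs carefully: since $S(z)=-(z-r(r+1))S_1(z)$ and $z-r(r+1)=r(r+1)>0$ while $S_1(2r(r+1))<0$ (as $s_-<2r(r+1)<s_+$ and $S_1$ opens upward by~(\ref{eq:S1})), we in fact get $S(2r(r+1))>0$, not $0$. The apparent contradiction with Lemma~\ref{lem:LMS-rs}(ii) is resolved by recalling that lemma is stated under the general hypotheses of Section~4, whereas here $r+s=-1$ is a boundary case; I should recompute $S(2r(r+1))$ from~(\ref{eq:SS})--(\ref{eq:Spm}) directly rather than quote the general formula.

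So the cleaner route for the ``if'' direction is: compute $S(2r(r+1))$, $L(2r(r+1))$, and $M(2r(r+1))$ explicitly as functions of $r$ using~(\ref{eq:LL})--(\ref{eq:Spm}). From $S_1(2r(r+1))=4r(r+1)\bigl(2r(r+1)-s_-\bigr)\bigl(2r(r+1)-s_+\bigr)$ and~(\ref{eq:Spm}) one gets $S(2r(r+1))=r(r+1)\cdot\bigl(\text{something}\bigr)$; I expect it to simplify to a perfect square times a positive factor, giving $\sqrt{S(2r(r+1))}$ in closed form. Then $L(2r(r+1))$ from~(\ref{eq:LL}) is a polynomial in $r$, and $M(2r(r+1))=L(2r(r+1))-4r^2(r+1)^2$ is smaller. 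The claim then amounts to the numerical inequalities $M(2r(r+1))\le \pm\frac{\sqrt{S(2r(r+1))}}{2}\le L(2r(r+1))$ for the appropriate sign, which are single-variable polynomial (and square-root) inequalities in $r\ge 0$ and can be settled by the kind of ``square both sides, the difference is a product of manifestly positive factors'' argument used repeatedly in Lemmas~\ref{lem:0512} and~\ref{lem:con}, possibly with a Sturm-sequence check (Theorem~\ref{thm:sturm}) if the polynomial is unwieldy.

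I expect the main obstacle to be exactly this sign-bookkeeping at the boundary $r+s=-1$: the general-position formulas of Section~4 partly degenerate here (for instance $\sqrt{S(-rs)}=r(r+1)s(s+1)(r+s+1)$ vanishes), so I cannot simply invoke Lemmas~\ref{lem:LMS-rs} and~\ref{lem:LMS-rs-2} and must instead re-derive the relevant values of $L,M,S$ at $z=2r(r+1)$ from the specialized expressions~(\ref{eq:LL})--(\ref{eq:Spm}) and then verify which of the two sign choices $\pm\frac{\sqrt{S(z)}}{2}$ threads between $M(z)$ and $L(z)$. Everything else reduces mechanically to the interval membership already handled by Lemmas~\ref{lem:0512}, \ref{lem:S1}, and~\ref{lem:con}.
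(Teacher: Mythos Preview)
Your ``only if'' direction is exactly the paper's argument: $S(z)\ge0$ forces $s_-\le z\le s_+$ by Lemma~\ref{lem:S1}, and then Lemma~\ref{lem:con} gives $z=2r(r+1)$.

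Your detour through Lemma~\ref{lem:LMS-rs} in the ``if'' direction rests on an arithmetic slip: under $r+s=-1$ one has $s=-(r+1)$ and hence $-rs=r(r+1)$, \emph{not} $2r(r+1)$. So Lemma~\ref{lem:LMS-rs} is evaluating $L,M,S$ at the excluded boundary point $r(r+1)$ of the hypothesis $z>r(r+1)$, and tells you nothing about $z=2r(r+1)$. That is the source of the apparent contradiction you found; there is no sign error in~(\ref{eq:SS}) or degeneration of Section~4 to worry about.

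Once you drop that detour, your ``cleaner route'' is precisely what the paper does, and it is far simpler than you feared: from~(\ref{eq:LL})--(\ref{eq:Spm}) one computes directly
\[
L(2r(r+1))=\tfrac{r(r+1)(2r+1)^2}{2}>0,\qquad
M(2r(r+1))=\tfrac{-r(r+1)(4r(r+1)-1)}{2}<0,\qquad
\sqrt{S(2r(r+1))}=r(r+1),
\]
so $M(2r(r+1))<\tfrac{\sqrt{S(2r(r+1))}}{2}=\tfrac{r(r+1)}{2}\le L(2r(r+1))$ holds with the $+$~sign. No Sturm sequences or squaring arguments are needed.
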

\begin{proof}
First suppose that $M(z)\leq\frac{\sqrt{S(z)}}{2}\leq L(z)$ or
$M(z)\leq\frac{-\sqrt{S(z)}}{2}\leq L(z)$ holds.
Since $S(z)\geq0$, by Lemma~\ref{lem:S1} we have $s_-\leq z\leq s_+$.
By Lemma~\ref{lem:con}, we have $z=2r(r+1)$.
Secondly suppose that $z=2r(r+1)$.
Since
\begin{align*}
L(2r(r+1))&=\frac{r(r+1)(2r+1)^2}{2}, \\
M(2r(r+1))&=\frac{-r(r+1)(4r(r+1)-1)}{2}<0, \\
\sqrt{ S(2r(r+1)) }&=r(r+1)
\end{align*}
by (\ref{eq:LL}), (\ref{eq:MM}), and (\ref{eq:SS}),
we have $M(2r(r+1))\leq\frac{\sqrt{S(2r(r+1))}}{2}\leq L(2r(r+1))$.
\end{proof}

\begin{lem} \label{lem:0902-4}
Let $W_1$  be the matrix defined by \eqref{eq:W1}, and
$W$ be the matrix defined by \eqref{eq:W}.
Suppose that $W$ is a complex Hadamard matrix.
If $r+s=-1$, then we have \ref{tii} in Theorem~\ref{thm:main}.
\end{lem}
\begin{proof}
By Lemma~\ref{lem:rEq0}, we have $(r,s)\neq(0,-1)$.
Thus, we may use results of this section. In particular,
by Lemmas~\ref{lem:0513-1} and \ref{lem:0729},
we have $k=2r(r+1)$.
By Section~\ref{sec.srg}, $\Gamma$ is a conference graph on $(2r+1)^2$ vertices.

By (\ref{eq:0115}) we have $2r^3(r+1)^3a_1((2r+1)a_1+1)=0$.
Hence $a_1=0$ or 
$a_1=-1/(2r+1)$.
If $a_1=0$ then by (\ref{eq:0804}), (\ref{eq:0805}) we have $a_0=-1$, $a_2=0$, respectively.
By $w_1\ne w_2$ we have $(b_0,b_1,b_2)=(0,\pm1,\mp1)$.
Therefore we have (a) of \ref{tii} in Theorem~\ref{thm:main}.
If $a_1=-1/(2r+1)$ 
then by (\ref{eq:0804}), (\ref{eq:0805}) we have $a_0=1$, 
$a_2=-1/(2r+1)$, 
respectively. By $w_1\ne w_2$ we have 
$(b_0,b_1,b_2)=(0,
\frac{\pm\sqrt{4r^2(r+1)^2-1}}{2r(r+1)},\frac{\mp\sqrt{4r^2(r+1)^2-1}}{2r(r+1)})$.
Therefore we have (b) of \ref{tii} in Theorem~\ref{thm:main}.
\end{proof}

\section{Properties of the polynomials $L(X)$, $M(X)$, and $S(X)$ for the case $r+s\geq0$} \label{sec.6}
In this section, we suppose that $r,s\in\Z$ and $r+s\geq0$.
We further assume $r\geq2$ and $s\leq-2$.
We consider properties of the polynomials
\eqref{eq:1224-1}, \eqref{eq:1224-2}, and \eqref{eq:1224-3}.
We put
\begin{align}
h&=\sqrt{4r(r+1)s(s+1)+1}, \label{hh}\\
\alpha_{\pm}&=\frac{r+s-1}{2}\pm\frac{\sqrt{(s-1)^2-6rs+r(r-2)}}{2}, \label{alpha}\\
\beta_{\pm}&=-rs-\frac{1}{2}\pm\frac{h}{2}, \label{beta}\\
\gamma_{\pm}&=\frac{r+s+3}{2}\pm\frac{\sqrt{r^2+2(5s+3)r+(s+3)^2}}{2}, \label{gamma}\\
\delta&=-rs+\sqrt{r(r+1)s(s+1)}. \label{v}
\end{align}
Then $\alpha_{\pm}$, $\beta_{\pm}$, $\delta\in\R$.
By (\ref{eq:1224-1}), (\ref{eq:1224-2}), and (\ref{eq:1224-3}) we have
\begin{align}
L(X)^2-\frac{S(X)}{4}&=(X-\alpha_-)(X-\alpha_+)(X-\beta_-)^2(X-\beta_+)^2, \label{eq:1208-6} \\
M(X)^2-\frac{S(X)}{4}&=(X-\gamma_-)(X-\gamma_+)(X-(\beta_-+1))^2(X-(\beta_++1))^2. \label{eq:1208-7}
\end{align}

\begin{lem} \label{lem:a-d}
We have the following:
\begin{enumerate}
\item\label{a-di} $\alpha_{\pm},\beta_-+1<-rs$,
\item\label{a-dii} $-rs<\beta_+<\delta<\beta_++1$,
\item\label{a-diii} if $\gamma_{\pm}\in\R$ then $\gamma_{\pm}<-rs$.
\end{enumerate}
\end{lem}
\begin{proof}
\ref{a-di} The inequality $\beta_-+1<-rs$ follows easily from (\ref{beta}).
Since $\alpha_-<\alpha_+$, it remains to show that $\alpha_+<-rs$.
Then by (\ref{alpha}) we have only to  show that
\[
\sqrt{(s-1)^2-6rs+r(r-2)}<-2rs-(r+s-1).
\]
Since $-2rs-(r+s-1)=-(2s+1)r-s+1>0$ and
\begin{align*}
& (-(2s+1)r-s+1)^2-((s-1)^2-6rs+r(r-2)) \\
&=4r(r+1)s(s+1)>0,
\end{align*}
we have $\alpha_+<-rs$.

\ref{a-dii} First, the inequality $-rs<\beta_+$ follows easily from the definition \eqref{beta}.
Since
\[
h<1+2\sqrt{r(r+1)s(s+1)},
\]
we have
$\beta_+<\delta$ from the definitions
(\ref{beta}) and (\ref{v}), while 
$\delta<\beta_++1$ follows trivially from these.

\ref{a-diii} Since $\gamma_-<\gamma_+$,
it is enough to show that $\gamma_+<-rs$.
By (\ref{gamma}) we have only to  show that
\[
\sqrt{r^2+2(5s+3)r+(s+3)^2}<-2rs-(r+s+3).
\]
Since $-2rs-(r+s+3)=-(2s+1)r-(s+3)>0$ and
\begin{align*}
& (-(2s+1)r-(s+3))^2-(r^2+2(5s+3)r+(s+3)^2) \\
&=4r(r+1)s(s+1)>0,
\end{align*}
we have $\gamma_+<-rs$. 
\end{proof}

\begin{lem} \label{lem:LMS-rs}
We have $L(-rs)=M(-rs)<0$.
\end{lem}
\begin{proof}
By (\ref{eq:1224-1}) and (\ref{eq:1224-2}) we have
\[
L(-rs)=M(-rs)=\frac{r(r+1)s(s+1)((2r+1)(s+1)-r)}{2}<0.
\]
\end{proof}

\begin{lem} \label{lem:LL}
We have the following:
\begin{enumerate}
\item\label{LL1}  $L(X)$ has exactly one real root $\zeta$ in $(-rs,\infty)$,
and $\beta_+\leq\zeta<\delta$,
\item\label{LL2}  $L(x)<0$ for $-rs<x<\zeta$, and 
$L(x)\geq0$ for $\zeta\leq x$.
\end{enumerate}
\end{lem}
\begin{proof}
Since $L'(X)=3(X-\theta_-)(X-\theta_+)$, where
\begin{align*}
\theta_{\pm}&=\frac{(-4s+1)r+s-3}{6}\pm\frac{\sqrt{\iota}}{6}, \\
\iota&=(16s(s+1)+1)r^2+2(8s^2+s-3)r+s^2-6s+3>0,
\end{align*}
$L(X)$ has the local maximum at $X=\theta_-$ and the local minimum at $X=\theta_+$.

\ref{LL1} We show that (a) $\theta_-<-rs<\theta_+$, 
(b) $\theta_+<\beta_+$, $L(\beta_+)\leq0$, and $L(\delta)>0$.
Then (a) together with Lemma~\ref{lem:LMS-rs} implies that the first half of 
\ref{LL1} holds, and
(b) implies that the latter half of \ref{LL1} holds.

First we show that (a) holds.
Since
\begin{align*}
6(-rs-\theta_-)&>-6rs-((-4s+1)r+s-3) \\
&=-(2s+1)r-s+3>0,
\end{align*}
we have $\theta_-<-rs$.
To show that $-rs<\theta_+$,
we have only to show that $-(2s+1)r-s+3<\sqrt{\iota}$.
Since
\[
\iota-(-(2s+1)r-s+3)^2=12r(r+1)s(s+1)-6>0,
\]
we have $-rs<\theta_+$.
Hence $\theta_-<-rs<\theta_+$.

Secondly we show that (b) holds.
To show that $\theta_+<\beta_+$,
by (\ref{beta}) and the definition of $\theta_+$
we have only to show that
$\sqrt{\iota}<-(2s+1)r-s+3h$.
Since
\begin{align*}
&(-(2s+1)r-s+3h)^2-\iota 
\\&=-6((2s+1)r+s)h 
+24s(s+1)r^2+6(4s^2+6s+1)r+6(s+1)
\\&>0,
\end{align*}
we have $\theta_+<\beta_+$.
We have
\[
L(\beta_+)=\frac{\kappa_1h+\kappa_2}{4},
\]
where
\begin{align*}
\kappa_1&=(2s+1)r+s<0, \\
\kappa_2&=4s(s+1)r^2+(2s(2s+1)-1)r-s.
\end{align*}
Since
\[
\kappa_1^2h^2-\kappa_2^2=4r(r+1)s(s+1)(r+s)(r+s+2)\geq0,
\]
by our assumption, we have
\begin{equation}\label{eq:0425-1}
L(\beta_+)\leq0.
\end{equation}
We have
\begin{equation}\label{eq:0425-2}
L(\delta)=\frac{\sqrt{r(r+1)s(s+1)}}{2}+2r(r+1)s(s+1)>0.
\end{equation}

\ref{LL2} This follows easily from \ref{LL1}, (\ref{eq:0425-1}), and (\ref{eq:0425-2}).
\end{proof}

\begin{lem}\label{lem:MM}
We have the following:
\begin{enumerate}
\item\label{MM1} $M(X)$ has exactly one real root $\eta$ in $(-rs,\infty)$,
and $\delta<\eta\leq\beta_++1$,
\item\label{MM2} $M(x)\leq0$ for $-rs<x\leq\eta$, and 
$M(x)>0$ for $\eta<x$.
\end{enumerate}
\end{lem}
\begin{proof}
Since $M'(X)=3(X-\nu_-)(X-\nu_+)$, where
\begin{align*}
\nu_{\pm}&=\frac{(-4s+1)r+s+5}{6}\pm\frac{\sqrt{\rho}}{6}, \\
\rho&=(16s(s+1)+1)r^2+2(8s^2+17s+5)r+s^2+10s+19>0,
\end{align*}
$M(X)$ has the local maximum at $X=\nu_-$ and the local minimum at $X=\nu_+$.

\ref{MM1} It is enough to show that (a) $\nu_-<-rs<\nu_+$, 
(b) $\nu_+<\delta$, $M(\delta)<0$, and $M(\beta_++1)\geq0$.
Then (a) together with Lemma~\ref{lem:LMS-rs} implies that 
the first half of \ref{MM1} holds, and
(b) implies that the latter half of \ref{MM1} holds.

First we show that (a) holds.
Since
\begin{align*}
6(-rs-\nu_-)&=-(2s+1)r-s-5+\sqrt{\rho},\\
&>-(2s+1)r-s-5>0,
\end{align*}
we have $\nu_-<-rs$.
To show that $-rs<\nu_+$,
we have only to show that $-(2s+1)r-s-5<\sqrt{\rho}$.
Since
\[
\rho-(-(2s+1)r-s+3)^2=12r(r+1)s(s+1)-6>0,
\]
we have $-rs<\nu_+$.
Hence $\nu_-<-rs<\nu_+$.

Secondly we show that (b) holds.
To show that $\nu_+<\delta$,
by (\ref{v}) and the definition of $\nu_+$
we have only to show that
$(2s+1)r+s+5+\sqrt{\rho}<6\sqrt{r(r+1)s(s+1)}$.
Since
\begin{align*}
&(\sqrt{r(r+1)s(s+1)})^2-((2s+1)r+s+5+\sqrt{\rho})^2 \\
&=-((4s+2)r+2s+10)\sqrt{\rho} \\
&\quad +(16s^2+16s-2)r^2+(16s^2-20s-20)r-2s^2-20s-44>0,
\end{align*}
we have $\nu_+<\delta$.
It is easy to show that
\begin{equation}\label{eq:0425-3}
M(\delta)=\frac{\sqrt{r(r+1)s(s+1)}}{2}-2r(r+1)s(s+1)<0.
\end{equation}
We have
\[
M(\beta_++1)=\frac{\sigma_1h+\sigma_2}{4},
\]
where
\begin{align*}
\sigma_1&=-((2s+1)r+s+2)>0,\\
\sigma_2&=-4s(s+1)r^2-(4s^2+6s+1)r-s-2.
\end{align*}
Since
\[
\sigma_1^2h^2-\sigma_2^2=4r(r+1)s(s+1)(r+s)(r+s+2)\geq0,
\]
by our assumption, we have
\begin{equation}\label{eq:0425-4}
M(\beta_++1)\geq0.
\end{equation}

\ref{MM2} This follows easily from \ref{MM1}, (\ref{eq:0425-3}), and (\ref{eq:0425-4}).
\end{proof}

\begin{lem} \label{lem:new-lem}
For $-rs\leq x$ we have the following:
\begin{enumerate}
\item\label{new-lem1} $L(x)^2\geq\frac{S(x)}{4}$,
and equality holds if and only if $x=\beta_+$,
\item\label{new-lem2} $M(x)^2\geq\frac{S(x)}{4}$,
and equality holds if and only if $x=\beta_++1$.
\end{enumerate}
\end{lem}
\begin{proof}
\ref{new-lem1} Since $\alpha_{\pm}<-rs$ by 
\ref{a-di} in Lemma~\ref{lem:a-d},
by \eqref{eq:1208-6} we have the claimed inequality.
Since $\alpha_{\pm},\beta_-<-rs<\beta_+$ by 
\ref{a-di} and \ref{a-dii} in Lemma~\ref{lem:a-d},
equality holds if and only if $x=\beta_+$.

\ref{new-lem2} First suppose that $\gamma_{\pm}\in\R$.
Since $\gamma_{\pm}<-rs$ by \ref{a-diii} in Lemma~\ref{lem:a-d},
by \eqref{eq:1208-7} we have the claimed inequality.
Since $\beta_-+1,\gamma_{\pm}<-rs$ by \ref{a-di} and 
\ref{a-diii} in Lemma~\ref{lem:a-d},
equality holds if and only if $x=\beta_++1$.

Secondly suppose that $\gamma_{\pm}\not\in\R$.
Then $\overline{\gamma_+}=\gamma_-$ by \eqref{gamma},
so we also have the claimed inequality.
Since $\beta_-+1<-rs$ by \ref{a-di} in Lemma~\ref{lem:a-d},
equality holds if and only if $x=\beta_++1$.
\end{proof}

For the remainder of this subsection,
we suppose that $r+s=0$.
By (\ref{eq:1224-1}), (\ref{eq:1224-2}), and (\ref{eq:1224-3}) we have
\begin{align}
L(X)&=\frac{(X-\tau_-)(X-\tau_+)(X-\beta_+)}{2}, \label{eq:LLL}\\
M(X)&=\frac{(2X^2-5X+2r^2+1)(X-(\beta_++1))}{2}, \label{eq:MMM}\\
S(X)&=(X-\beta_+)^2(X-(\beta_++1))^2\geq0, \label{eq:SSS}
\end{align}
where
\begin{align}
\tau_{\pm}&=\frac{-1\pm\sqrt{16r^2+1}}{4}, \nonumber\\
\beta_+&=2r^2-1\in\Z.\label{eq:b+}
\end{align}
By (\ref{alpha}) and (\ref{beta}) we have
\begin{align*}
\alpha_{\pm}&=\frac{-1\pm\sqrt{8r^2+1}}{2},\\
\beta_-&=0.
\end{align*}

\begin{lem}\label{lem:F}
Suppose that $r+s=0$.
Let $\zeta$ and $\eta$ be as defined in {\rm Lemmas~\ref{lem:LL} and \ref{lem:MM}}.
Then we have $\zeta=\beta_+$ and $\eta=\beta_++1$.
\end{lem}
\begin{proof}
We have $\tau_{\pm}<r^2$.
Then by (i) in Lemma~\ref{lem:LL} and (\ref{eq:LLL}) we have $\zeta=\beta_+$.
Since the discriminant of $2x^2-5x+2r^2+1$ is $-16r^2+17<0$,
by (i) in Lemma~\ref{lem:MM} and (\ref{eq:MMM}) we have $\eta=\beta_++1$.
\end{proof}

\begin{lem}\label{lem:0729-1}
Suppose that $r+s=0$, $z\in\Z$ and $r^2<z$.
Then the following are equivalent:
\begin{enumerate}
\item\label{0729-1-1} $S(z)\geq0$ and $M(z)\leq\frac{\sqrt{S(z)}}{2}\leq L(z)$,
\item\label{0729-1-2} $S(z)\geq0$ and $M(z)\leq\frac{-\sqrt{S(z)}}{2}\leq L(z)$,
\item\label{0729-1-3} $S(z)=0$,
\item\label{0729-1-4} $z\in\{\beta_+, \beta_++1\}$.
\end{enumerate}
\end{lem}
\begin{proof}
Since $z\in\Z$ 
and $\beta_+, \beta_++1\in\Z$ by \eqref{eq:b+},
the condition \ref{0729-1-4} is equivalent to
$\beta_+\leq z\leq\beta_++1$.

First suppose that \ref{0729-1-1} holds.
Since $L(z)\geq0$,
by \ref{LL2} in Lemma~\ref{lem:LL} and Lemma~\ref{lem:F} we have
\begin{equation}\label{eq:1929}
\beta_+\leq z.
\end{equation}
Suppose that $M(z)\leq0$.
By \ref{MM2} in Lemma~\ref{lem:MM} and Lemma~\ref{lem:F} 
we have $z\leq\beta_++1$.
By \eqref{eq:1929} we have $z=\beta_+, \beta_++1$.
Suppose that $M(z)>0$.
Then $M(z)^2\leq\frac{S(z)}{4}$.
By \ref{new-lem2} in Lemma~\ref{lem:new-lem} we have $z=\beta_++1$.
Thus we have (iv).

Secondly suppose that \ref{0729-1-2} holds.
Since $M(z)\leq0$,
by \ref{MM2} in Lemma~\ref{lem:MM} and Lemma~\ref{lem:F} we have
\begin{equation}\label{eq:1940}
z\leq\beta_++1.
\end{equation}
Suppose that $L(z)\geq0$.
Then by \ref{LL2} Lemma~\ref{lem:LL} and Lemma~\ref{lem:F} 
we have $\beta_+\leq z$.
By \eqref{eq:1940} we have $z=\beta_+, \beta_++1$.
Suppose that $L(z)<0$.
Then $L(z)^2\leq\frac{S(z)}{4}$.
By \ref{new-lem1} in Lemma~\ref{lem:new-lem} we have $z=\beta_+$.
Thus we have \ref{0729-1-4}.

The equivalence of \ref{0729-1-3} and \ref{0729-1-4} 
follows immediately from \eqref{eq:SSS}.

Finally suppose that \ref{0729-1-4} holds.
Since $S(z)=0$ by \ref{0729-1-3}, 
it suffices to show $L(z)\geq0$ and $M(z)\leq0$.
By \ref{LL2} in Lemma~\ref{lem:LL} and Lemma~\ref{lem:F} we have $L(z)\geq0$.
By \ref{MM2} in Lemma~\ref{lem:MM} and Lemma~\ref{lem:F} we have $M(z)\leq0$.
\end{proof}

\begin{lem}\label{lem:25}
Let $W_1$  be the matrix defined by \eqref{eq:W1}, and
$W$ be the matrix defined by \eqref{eq:W}.
Suppose that $W$ is a complex Hadamard matrix.
If $r+s=0$, then we have \ref{ti} in Theorem~\ref{thm:main}.
\end{lem}
\begin{proof}
By Lemma~\ref{lem:rEq0}, we have $r^2=-rs<k$.
Also, by Lemma~\ref{lem:rEq0}, we have $s\leq-2$ and $r\geq2$.
Thus, we may use results of this section. In particular,
by Lemmas~\ref{lem:0729-1}, \ref{lem:0729}, and \eqref{eq:b+} 
we have $k=2r^2$ or $k=2r^2-1$.
First suppose $k=2r^2$.
By \eqref{eq:0115} we have $a_1=-1$.
Then by \eqref{eq:0804}, \eqref{eq:0805} we have $a_0=1$, $a_2=1$, respectively.
Therefore we have \ref{ti} in Theorem~\ref{thm:main}.
Secondly suppose $k=2r^2-1$.
By (\ref{m1}) we have $m_1=\frac{(2r-1)(2r^2-1)}{2r}$.
This is a contradiction since $m_1$ must be an integer.
\end{proof}

\section{The case $r+s>0$}\label{sec:rs-p}
In this section, we suppose that $r,s\in\Z$ and $r+s>0$.
Then by Lemma~\ref{lem:rEq0} we have $r\geq3$ and $s\leq-2$.
We consider properties of the polynomials
(\ref{eq:1224-1}), (\ref{eq:1224-2}), and (\ref{eq:1224-3}).
Let
\begin{align*}
\kappa(x)&=(2s+1)^2x^3 -(2s+1)(8s^3-2s^2-s+2)x^2 \nonumber\\
&\quad -(16s^5+8s^2+2s-1)x +4s^2 + s.
\end{align*}

\begin{lem}\label{lem:kappa}
Assume that $-s+1\leq x<-2s+1$.
Then we have $\kappa(x)<0$.
\end{lem}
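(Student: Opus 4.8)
The plan is to observe first that $\kappa(x)$ does not involve $r$ at all, so the statement reduces to a fact about a single cubic in $x$ whose coefficients depend only on $s$; the hypotheses $r\ge3$ and $r+s>0$ constrain $s$ only in relation to $r$ and are not needed, so we use only $s\le-2$ (in particular $s\in\Z$, hence $2s+1\le-3\ne0$). Since the leading coefficient of $\kappa$ is $(2s+1)^2>0$, I would prove the claim by showing that $\kappa$ is strictly decreasing on the closed interval $[-s+1,\,-2s+1]$ (which is nonempty because $s<0$) and that $\kappa(-s+1)<0$; together these give $\kappa(x)\le\kappa(-s+1)<0$ for every $x$ with $-s+1\le x<-2s+1$.

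For the monotonicity, note that $\kappa'$ is a quadratic with positive leading coefficient $3(2s+1)^2$, so it suffices to check that $\kappa'$ is negative at the two endpoints: an upward-opening parabola that is negative at $a$ and at $b$ is negative throughout $[a,b]$. Expanding $\kappa'$ at the endpoints yields
\[
\kappa'(-s+1)=s\,(16s^4-12s^3-28s^2-3s+2),\qquad
\kappa'(-2s+1)=4s^2\,(12s^3+8s^2-6s-3).
\]
Both are negative for $s\le-2$: in the first expression the bracket is positive because $16s^4-28s^2=4s^2(4s^2-7)>0$ while $-12s^3-3s+2>0$, and the outer factor $s$ is negative; in the second, writing $u=-s\ge2$ the bracket equals $-u(12u^2-8u-6)-3<0$ since $12u^2-8u-6>0$ for $u\ge2$.

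For the endpoint value I would similarly expand $\kappa(-s+1)=s^2(8s^3+4s^2-8s+1)$. With $u=-s\ge2$ the bracket is $-(8u^3-4u^2-8u-1)$, and $8u^3-4u^2-8u-1\ge 8u^3-9u^2=u^2(8u-9)>0$ for $u\ge2$, using $8u\le 4u^2$ and $1\le u^2$. Hence $\kappa(-s+1)<0$, and combined with the monotonicity this finishes the proof.

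The only real labor is the three polynomial expansions producing the displayed factorizations of $\kappa(-s+1)$, $\kappa'(-s+1)$, and $\kappa'(-2s+1)$; these are entirely mechanical. After that, signing each bracket is routine, the one point requiring care being that $s<0$ reverses inequalities, which is why it is convenient to pass to $u=-s\ge2$. Conceptually there is no obstacle: the structural content is simply that $\kappa$ is a positive-leading-coefficient cubic which happens to be monotone decreasing on the interval in question.
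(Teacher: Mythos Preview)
Your argument is correct. The three polynomial identities you display for $\kappa(-s+1)$, $\kappa'(-s+1)$, and $\kappa'(-2s+1)$ check out, and your sign analyses for $s\le-2$ are all valid; in particular the observation that an upward-opening quadratic which is negative at both endpoints of an interval is negative throughout is exactly what makes the monotonicity step go through.

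Your route, however, differs from the paper's. The paper does not touch the derivative at all: it evaluates $\kappa$ at the three points $-s$, $-s+1$, $-2s+1$, finding $\kappa(-s)>0$, $\kappa(-s+1)<0$, $\kappa(-2s+1)<0$. Since the leading coefficient $(2s+1)^2$ is positive, this forces one real root in $(-\infty,-s)$, one in $(-s,-s+1)$, and one in $(-2s+1,\infty)$, accounting for all three roots of the cubic; hence $\kappa$ has no zero on $[-s+1,-2s+1]$ and, being negative at $-s+1$, is negative throughout. Both proofs share the endpoint evaluation $\kappa(-s+1)=s^2(8s^3+4s^2-8s+1)$. The paper's root-counting argument trades the derivative computations for one extra value $\kappa(-s)$ and is perhaps conceptually cleaner (three sample points for a cubic); your derivative approach is equally elementary and has the mild advantage of yielding the stronger conclusion that $\kappa$ is strictly decreasing on the interval.
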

\begin{proof}
Since 
\begin{align*}
\kappa(-s)&=-4s^2(s-1)(2s(s+1)+1)>0,\\
\kappa(-s+1)&=s^2(8s^3+4s^2-8s+1)<0,\\
\kappa(-2s+1)&=-32s^4(s^2-1)<0,
\end{align*}
we have the assertion.
\end{proof}

Let
\begin{align}
\psi(x)&=(s+1)(x+1)((2s+1)x-1), \label{eq:0430} \\
\phi(x)&=2\psi(x)-(2s+1)(x+2s-1).\label{eq:0513-1}
\end{align}

\begin{lem}\label{lem:0430}
We have $\psi(r)>0$ and $\phi(r)>0$.
\end{lem}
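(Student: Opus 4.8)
The plan is to read off both inequalities from the signs of the factors, after a short algebraic rearrangement in the case of $\phi$. Throughout we use only $r\ge3$ and $s\le-2$; the hypothesis $r+s>0$ is not needed for this particular lemma (it enters only in the later lemmas of this subsection).

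First I would handle $\psi(r)$. Since $s\le-2$, we have $s+1<0$ and $2s+1\le-3<0$, hence $(2s+1)r-1<0$ because $r>0$; also $r+1>0$. Thus in the product $\psi(r)=(s+1)(r+1)\bigl((2s+1)r-1\bigr)$ given by \eqref{eq:0430}, the first and third factors are negative and the middle one is positive, so $\psi(r)>0$.

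For $\phi(r)$ the key step is to put $\phi$ into a manifestly positive form. Expanding $\psi(x)=(s+1)\bigl((2s+1)x^2+2sx-1\bigr)$ and substituting into \eqref{eq:0513-1}, one collects terms to obtain
\[
\phi(x)=2(s+1)(2s+1)x^2+(4s^2+2s-1)(x-1).
\]
Then I would check that all three of $2(s+1)(2s+1)$, $4s^2+2s-1$, and $r-1$ are positive: the first because $s+1<0$ and $2s+1<0$; the second because the roots of $4t^2+2t-1$ are $\tfrac{-1\pm\sqrt5}{4}\in(-1,1)$, so $4s^2+2s-1>0$ whenever $s\le-2$ (in fact $4s^2+2s-1\ge11$); and the third because $r\ge3$. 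Consequently every term of $\phi(r)$ is positive, so $\phi(r)>0$.

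The only mildly delicate point is the simplification leading to the displayed factored form of $\phi(x)$; once that identity is verified, both conclusions reduce to immediate sign checks, so I do not anticipate any real obstacle.
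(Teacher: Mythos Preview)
Your treatment of $\psi(r)$ is fine and matches the paper's one-line observation.

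For $\phi(r)$, however, your displayed identity is off by a constant. Expanding carefully,
\[
\phi(x)=2(s+1)(2s+1)x^2+(4s^2+2s-1)x-(4s^2+2s+1),
\]
so the correct rearrangement is
\[
\phi(x)=2(s+1)(2s+1)x^2+(4s^2+2s-1)(x-1)-2,
\]
not the version without the trailing $-2$. (A quick check: $\phi(0)=-2(s+1)-(2s+1)(2s-1)=-4s^2-2s-1$, whereas your formula gives $-4s^2-2s+1$.) With the corrected identity your sign argument still goes through, since for $r\ge3$ and $s\le-2$ one has $2(s+1)(2s+1)r^2\ge54$ and $(4s^2+2s-1)(r-1)\ge22$, easily absorbing the extra $-2$. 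So the approach is sound once the algebra is fixed, and your remark that $r+s>0$ is not needed here remains valid.

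The paper argues differently: it notes $\phi$ is a quadratic in $x$ with positive leading coefficient, computes $\phi(0)<0$ and $\phi(-s)>0$, and then uses $r>-s$ (i.e.\ the hypothesis $r+s>0$) to conclude $\phi(r)>0$. Your route is more self-contained and avoids that extra hypothesis, at the cost of the explicit simplification; the paper's route avoids the simplification but leans on $r+s>0$. Either is acceptable once your constant term is repaired.
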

\begin{proof}
The inequality $\psi(r)>0$ follows immediately by (\ref{eq:0430}).
Since $r>-s$ and
\begin{align*}
\phi(0)&=-2s(2s+1)-1<0,\\
\phi(-s)&=(s-1)\left(4s^2(s+2)-2(s+1)(s-2)-3\right)>0,
\end{align*}
we have $\phi(r)>0$.
\end{proof}

Let $h$ be defined as (\ref{hh}).

\begin{lem}\label{lem:bound1}
Assume that $k=-rs+\frac{h+\epsilon}{2}$ and $h\in\Z$, where $\epsilon\in\{\pm1\}$.
Then we have
\[
n\geq -(2s+1)r+2+\frac{2\psi(r)}{h+1}.
\]
\end{lem}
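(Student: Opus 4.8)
The quantity to bound below is $n$, the number of vertices of the strongly regular graph; by \eqref{nn} and the expressions relating the parameters, $n$ is determined by $k_1$, $r$, $s$, and here $k_1=-rs+\frac{h+\epsilon}{2}$ with $h=\sqrt{4r(r+1)s(s+1)+1}$. The natural first step is to substitute this value of $k_1$ into the formula for $n$ obtained from \eqref{mu}, \eqref{lambda}, \eqref{eqk2}, \eqref{nn} — or equivalently into $n=1+k_1+k_2$ together with \eqref{mu0} for $k_2$ — and simplify. Since $\mu=k_1+rs=\frac{h+\epsilon}{2}$ is now explicit, $k_2=k_1(k_1-\lambda-1)/\mu$ becomes a rational function of $r,s,h,\epsilon$, and $n$ is a rational expression whose denominator is (a constant multiple of) $h+\epsilon$. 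The plan is to write $n$ in the form $n = A(r,s,\epsilon) + \dfrac{B(r,s)}{h+\epsilon}$ for suitable polynomial $A$ and $B$, using $h^2=4r(r+1)s(s+1)+1$ to reduce powers of $h$.

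The second step is to identify $A$ and $B$ and compare them against the target. The claimed bound is $n\ge -(2s+1)r+2+\frac{2\psi(r)}{h+1}$, where $\psi(r)=(s+1)(r+1)((2s+1)r-1)$ by \eqref{eq:0430}. I expect the computation to yield exactly $A(r,s,\epsilon)=-(2s+1)r+2 + (\text{something involving }\epsilon)$ and $B(r,s)$ proportional to $\psi(r)$; the auxiliary polynomial $\phi(r)=2\psi(r)-(2s+1)(r+2s-1)$ from \eqref{eq:0513-1} is presumably what appears when $\epsilon=-1$ versus $\epsilon=+1$, since passing between $\frac{1}{h-1}$ and $\frac{1}{h+1}$ costs a factor $\frac{h+1}{h-1}$. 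Concretely: when $\epsilon=1$ the bound is an equality (or $n$ equals the right-hand side on the nose), and when $\epsilon=-1$ one gets $n = -(2s+1)r+2 + \frac{2\psi(r)}{h-1}$ (or similar), and since $h-1<h+1$ and $\psi(r)>0$ by Lemma~\ref{lem:0430}, the inequality $\frac{2\psi(r)}{h-1}\ge\frac{2\psi(r)}{h+1}$ is immediate. One must also check that the integer/parity constraints ($h\in\Z$, and $n$ an integer) do not force any sign issues — but positivity of $\psi(r)$, $\phi(r)$ from Lemma~\ref{lem:0430} is exactly the input prepared for this, so the two cases $\epsilon=\pm1$ should both reduce to "$n$ equals the target plus a manifestly nonnegative correction."

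The main obstacle will be the bookkeeping in the first step: carrying out the substitution $k_1=-rs+\frac{h+\epsilon}{2}$ through \eqref{mu0} and \eqref{nn} and then reducing modulo $h^2 = 4r(r+1)s(s+1)+1$ to get a clean two-term expression $A + B/(h+\epsilon)$. Once that normal form is in hand, the rest is a short comparison using $\psi(r)>0$ (and, for the $\epsilon=-1$ case, possibly $\phi(r)>0$) from Lemma~\ref{lem:0430}. So I would: (1) compute $\mu=\frac{h+\epsilon}{2}$, then $k_1-\lambda-1 = k_1 - (r+s) - \mu - 1$ and hence $k_2 = k_1(k_1-\lambda-1)/\mu$; (2) form $n=1+k_1+k_2$ and clear denominators, substituting $h^2=4r(r+1)s(s+1)+1$ wherever $h^2$ appears; (3) rearrange into $A + B/(h+\epsilon)$ and recognize $B$ in terms of $\psi(r)$ and $A$ as $-(2s+1)r+2$ up to the $\epsilon$-correction; (4) conclude using $\psi(r)>0$ (Lemma~\ref{lem:0430}) that in both cases $\epsilon=\pm1$ the right-hand side $-(2s+1)r+2+\frac{2\psi(r)}{h+1}$ is a lower bound for $n$.
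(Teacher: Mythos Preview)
Your overall strategy---substitute $k_1=-rs+\frac{h+\epsilon}{2}$ into $n=1+k_1+k_2$ via \eqref{mu0}, simplify using $h^2=4r(r+1)s(s+1)+1$, and compare to the target---is sound and in fact yields a cleaner argument than the paper's. However, your predictions about what the computation produces are wrong. With $\mu=k_1+rs=\frac{h+\epsilon}{2}$ one has $k_1-\lambda-1=-(r+1)(s+1)$, hence
\[
k_2=\frac{-2k_1(r+1)(s+1)}{h+\epsilon}
=\frac{2rs(r+1)(s+1)}{h+\epsilon}-(r+1)(s+1)
=\frac{h-\epsilon}{2}-(r+1)(s+1),
\]
using $2rs(r+1)(s+1)=\frac{h^2-1}{2}=\frac{(h+\epsilon)(h-\epsilon)}{2}$. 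Therefore
\[
n=1+k_1+k_2=h-2rs-r-s,
\]
with no $\tfrac{1}{h+\epsilon}$ term at all, and in particular $n$ is \emph{independent of $\epsilon$}. So there is no case split, no role for $\phi(r)$, and the bound is never an equality. The remaining step is the polynomial inequality $(h-s-2)(h+1)\geq 2\psi(r)$, which after substituting $h^2=4r(r+1)s(s+1)+1$ reduces to $-(s+1)(h+2r^2-1)\geq0$, true since $s+1<0$ and $h>0$.

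The paper proceeds differently: it never computes $n$ in closed form. Instead it first shows $h\geq -2(s+1)r+3$ (using that $h\in\Z$ is odd), deduces $k_1\geq -(2s+1)r+1$, and then bounds $n=1+k_1-\frac{k_1(r+1)(s+1)}{k_1+rs}$ from below by replacing $k_1$ with this lower bound in the additive term and in the numerator of the fraction (valid since $(r+1)(s+1)<0<k_1+rs$), finally using $k_1+rs\leq\frac{h+1}{2}$ and $\psi(r)>0$. Your route is more direct and does not need the parity of $h$; the paper's route avoids the full simplification but leans on the integrality hypothesis earlier.
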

\begin{proof}
First we show that
\begin{equation}\label{eq:h-bound}
h\geq-2(s+1)r+3.
\end{equation}
To do this, since $h>0$ and
\[
h^2-(-2(s+1)r+1)^2=4r(s+1)(s-r+1)>0,
\]
we have $h>-2(s+1)r+1$. Since $h$ is odd,
we have (\ref{eq:h-bound}).

Secondly we show the assertion.
Since 
\begin{align}
k&\geq-rs+\frac{h-1}{2} \nonumber\\
&\geq-(2s+1)r+1, && \text{(by (\ref{eq:h-bound}))} \label{eq:0515} 
\end{align}
we have
\begin{align*}
n&=1+k+\ell\\
&=1+k-\frac{k(r+1)(s+1)}{k+rs} && \text{(by (\ref{mu0}))} \\
&\geq1-(2s+1)r+1+\frac{(s+1)(r+1)((2s+1)r-1)}{k+rs} && \text{(by (\ref{eq:0515}))} \\
&\geq -(2s+1)r+2+\frac{2\psi(r)}{h+1} && \text{(by  (\ref{eq:0430}))}.\qedhere
\end{align*}
\end{proof}

Let $u=r+s$. Then $u\in\Z$ and
\begin{equation}\label{eq:20191101}
1\leq u\leq r-2.
\end{equation}

\begin{lem} \label{lem:0902-1}
The polynomial $S''(X)$ has two distinct real roots:
\begin{equation}\label{eq:0902-1}
\tau_{\pm}=\frac{c_1\pm\sqrt{c_2}}{6(u+1)^2},
\end{equation}
where
\begin{align*}
c_1&=3(u+1)^2(2r(r-u)-1)+3(r(r+1)+(r-u)(r-u-1)), \\
c_2&=12r(r+1)u(u+2)(u^2+2u-2)(r-u)(r-u-1)+3(u+1)^2.
\end{align*}
\end{lem}
\begin{proof}
Observe $c_2>0$ follows from \eqref{eq:20191101}.
Since
\begin{align*}
S''(X)&=12(u+1)^2X^2 \\
&\quad -12\left(2(u^2+2u+2)r^2-2u(u^2+2u+2)r-(u+1)\right)X \\
&\quad +8(u^2+2u+6)r^4-16u(u^2+2u+6)r^3 \\
&\quad +4(2u^4+5u^3+15u^2-4u-6)r^2 \\
&\quad -4u(u+1)(u^2+2u-6)r+2
\end{align*}
by (\ref{eq:1224-3}), we have (\ref{eq:0902-1}).
\end{proof}

\begin{lem} \label{lem:0902-2}
Let $\tau_{\pm}$ be the real number defined by {\rm(\ref{eq:0902-1})}.
Then $\tau_{\pm}<\beta_+$.
\end{lem}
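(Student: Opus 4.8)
The plan is to bound $\tau_+$ from above (which automatically handles $\tau_-<\tau_+$) and compare with $\beta_+=-rs-\tfrac12+\tfrac h2$. Since $\beta_+>-rs$ by (ii) of Lemma~\ref{lem:a-d}, and since $S''(X)$ is a quadratic with positive leading coefficient $12(u+1)^2$, it suffices to exhibit a point $x_0$ with $-rs<x_0\le\beta_+$ at which $S''(x_0)\ge 0$ and which lies at or to the right of the larger root $\tau_+$; equivalently, show $S''(-rs)\ge 0$ together with the vertex-abscissa bound $\tfrac{\tau_++\tau_-}{2}=\tfrac{c_1}{6(u+1)^2}<\beta_+$. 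Actually the cleanest route is: first compute $S''(-rs)$ and show it is $\ge 0$, and second show the vertex of the parabola $S''$ lies strictly to the left of $-rs$; together these force both roots $\tau_\pm$ to be $\le -rs<\beta_+$.

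First I would compute $S''(-rs)$ using the explicit expansion of $S''(X)$ in Lemma~\ref{lem:0902-1}; substituting $X=-rs$ and $u=r+s$ should collapse to a product of manifestly nonnegative factors under the standing hypotheses $r\ge 3$, $s\le -2$, $r+s>0$ (so $r-u=-s\ge 2$ and $u\ge 1$), in the same spirit as the factorizations $c_2=12r(r+1)u(u+2)(u^2+2u-2)(r-u)(r-u-1)+3(u+1)^2$ already recorded. I expect $S''(-rs)$ to factor as a positive constant times $r(r+1)s(s+1)$ times something like $(r+s)(r+s+2)$ or a similar evidently positive quantity, giving $S''(-rs)\ge 0$. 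Second, the vertex abscissa is $\tfrac{c_1}{6(u+1)^2}$ with $c_1=3(u+1)^2(2r(r-u)-1)+3(r(r+1)+(r-u)(r-u-1))$; I would show $\tfrac{c_1}{6(u+1)^2}<-rs$, i.e. $c_1<-6rs(u+1)^2=6(u+1)^2 r(r-u)$ (using $-s=r-u$), which reduces to showing $3(u+1)^2(2r(r-u)-1)+3(r(r+1)+(r-u)(r-u-1))<6(u+1)^2 r(r-u)$, i.e. $r(r+1)+(r-u)(r-u-1)<(u+1)^2$ — but wait, that inequality is false in general, so the vertex is in fact to the \emph{right} of $-rs$; hence the correct argument must instead be that $S''(-rs)\ge 0$ \emph{and} $S'''>0$ forces, together with a sign computation showing $S''$ is increasing past $-rs$...

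Let me restate the clean route: since $S''$ is an upward parabola, $\tau_+<\beta_+$ is equivalent to $S''(\beta_+)>0$ together with $\beta_+$ lying to the right of the vertex, i.e. $\beta_+>\tfrac{c_1}{6(u+1)^2}$. So the plan is (1) show $S''(\beta_+)>0$, and (2) show $\beta_+>\tfrac{c_1}{6(u+1)^2}$. For (1), I would use the identity (\ref{eq:1208-6}), which gives $L(X)^2-\tfrac{S(X)}{4}=(X-\alpha_-)(X-\alpha_+)(X-\beta_-)^2(X-\beta_+)^2$; differentiating twice and evaluating at $\beta_+$, the double zero means $S''(\beta_+)=8L(\beta_+)L''(\beta_+)-8(L'(\beta_+))^2+\text{(lower-order terms that vanish)}$, and one plugs in the known values $L(\beta_+)=\tfrac{\kappa_1 h+\kappa_2}{4}$ from Lemma~\ref{lem:LL} together with $L''(\beta_+)=6\beta_++4rs-r-s+3$ and a formula for $L'(\beta_+)$; alternatively, and probably more simply, evaluate $S''(\beta_+)$ directly from the quadratic in Lemma~\ref{lem:0902-1} by substituting $\beta_+=-rs-\tfrac12+\tfrac h2$ and using $h^2=4r(r+1)s(s+1)+1$ to reduce, obtaining an expression of the form $(\text{positive})+(\text{positive})h$ whose positivity follows from $r\ge 3,\ s\le -2,\ r+s>0$, exactly as in the proofs of Lemmas~\ref{lem:LL} and~\ref{lem:MM}. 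For (2), substitute $-s=r-u$ into $c_1$ and into $\beta_+=ru+\text{(rest)}$... hmm, $-rs=r(r-u)$ is not equal to $ru$; I would just verify $6(u+1)^2\beta_+>c_1$ directly, bounding $h$ below via the estimate $h\ge -2(s+1)r+3$ (which is \eqref{eq:h-bound} in Lemma~\ref{lem:bound1}) so that $6(u+1)^2\beta_+\ge 6(u+1)^2\bigl(-rs-\tfrac12+\tfrac{-2(s+1)r+3}{2}\bigr)$, and check this lower bound exceeds $c_1$ as a polynomial inequality in $r,s$ on the region $r\ge 3,\ s\le -2,\ r+s>0$.

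The main obstacle will be step (1): showing $S''(\beta_+)>0$. The direct substitution of $\beta_+=-rs-\tfrac12+\tfrac h2$ into the degree-4 coefficients of $S''$ produces a bulky expression linear in $h$ after using $h^2=4r(r+1)s(s+1)+1$, and one must then show both the "$h$-coefficient" and the "constant part" (or at least the whole combination, after squaring to clear $h$ as in the proofs of Lemmas~\ref{lem:LL}--\ref{lem:MM}) are positive on the parameter region. I expect this to factor, after using $u=r+s\ge 1$ and $-s=r-u\ge 2$, into products of the evidently positive quantities $r$, $r+1$, $-s$, $-s-1$, $u$, $u+2$, $u^2+2u-2$, and $r-u$, mirroring the factorization of $c_2$; the bookkeeping is routine in principle but is the step where a sign could genuinely go wrong, so it is where I would concentrate the care (and, if needed, invoke Magma as the paper does elsewhere). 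Once $S''(\beta_+)>0$ and $\beta_+$ exceeds the vertex abscissa are both established, $\tau_+<\beta_+$ (hence also $\tau_-<\tau_+<\beta_+$) follows immediately from the convexity of $S''$.
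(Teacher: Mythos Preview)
Your final ``clean route'' is logically sound: since $S''$ is an upward parabola with vertex abscissa $\frac{c_1}{6(u+1)^2}$, the pair of conditions $S''(\beta_+)>0$ and $\beta_+>\frac{c_1}{6(u+1)^2}$ does force $\beta_+>\tau_+$. This is, however, a different route from the paper's. The paper inserts the \emph{rational} waypoint $x_0=r(2(r-u)-1)$ and proves $\tau_+<x_0<\beta_+$ by two decoupled squarings: the inequality $x_0<\beta_+$ reduces to $h>2r(r-u-1)+1$ and involves only the radical $h$, while $\tau_+<x_0$ reduces to $6(u+1)^2x_0-c_1>\sqrt{c_2}$ and involves only $\sqrt{c_2}$. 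Your step~(1) evaluates $S''$ at the irrational point $\beta_+$, so the $h$-radical gets entangled with the $c_1,c_2$ data; after using $h^2=4r(r+1)s(s+1)+1$ you are left with an expression $P+Qh$ whose sign still needs a further squaring argument. It works, but the rational waypoint is exactly what buys the paper its brevity.

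Two smaller corrections to your plan as written. First, the bound \eqref{eq:h-bound} you invoke for step~(2) requires $h$ to be an odd integer, which is a hypothesis of Lemma~\ref{lem:bound1} and is not available here; use the unrounded $h>-2(s+1)r+1$ instead, or simply square the target $(u+1)^2h>r(r+1)+(r-u)(r-u-1)$ directly. Second, your alternative route to step~(1) via differentiating \eqref{eq:1208-6} is miscomputed: with $F=L^2-\tfrac14S$, the double zero at $\beta_+$ gives $F(\beta_+)=F'(\beta_+)=0$ but $F''(\beta_+)=2(\beta_+-\alpha_-)(\beta_+-\alpha_+)(\beta_+-\beta_-)^2\ne 0$, and $(L^2)''=2(L')^2+2LL''$ has a \emph{plus} sign on $(L')^2$; so your displayed formula has both a sign error and a nonvanishing ``lower-order'' term, and that route is no simpler than direct substitution.
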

\begin{proof}
Since $\tau_-<\tau_+$, it is enough to show that $\tau_+<\beta_+$.
By (\ref{beta}), we have
\[
\beta_+=r(r-u)-\frac12+\frac{h}{2}.
\]
Since
\[
h^2-(2r(r-u-1)+1)^2=4r(2r-u-1)(r-u-1)>0,
\]
we have 
\begin{equation}\label{eq:beta1101a}
\beta_+-r(2(r-u)-1)=\frac{1}{2}(h-(2r(r-u-1)+1))>0.
\end{equation}
Since
\begin{align*}
& \left( 6(u+1)^2r( 2(r-u)-1 )-c_1 \right)^2-c_2\\
&=6(u+1)^2\left(2r(r-u-1)\left( u(u+2)( 2r(r-u-2)+u)+3\right)+1\right)>0,
\end{align*}
we have 
\begin{equation}\label{eq:beta1101b}
r(2(r-u)-1)-\tau_+=\frac{6(u+1)^2r( 2(r-u)-1)-c_1-\sqrt{c_2}}{6(u+1)^2}>0.
\end{equation}
By (\ref{eq:beta1101a}) and (\ref{eq:beta1101b}), we obtain $\tau_+<\beta_+$.
\end{proof}

Define
\begin{align*}
g_1&=4u(u+2)(u(u+2)-2)r(r+1)(r-u)(r-u-1)+(u+1)^2, \\
g_2&=2r(r+1)(r-u)(r-u-1)\nonumber\\
&\quad \times\left(8u(u+2)r(r+1)(r-u)(r-u-1)+7u(u+2)-1\right)-1,\\
g_3&=16u(u+2)r(r+1)(r-u)(r-u-1)-1.
\end{align*}

\begin{lem}\label{lem:g123}
We have $g_1>0$, $g_2>0$, and $g_3>0$.
\end{lem}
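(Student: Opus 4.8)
The plan is to reduce everything to two elementary lower bounds and then read off positivity. Recall the standing assumptions of this subsection, $r\geq3$, $s\leq-2$, together with \eqref{eq:20191101}, which gives $u\in\Z$ and $1\leq u\leq r-2$. First I would record the consequences $u\geq1$, $r-u\geq2$, and $r-u-1\geq1$; in particular the six integers $u,\ u+2,\ r,\ r+1,\ r-u,\ r-u-1$ are all positive. From these I obtain the two ``building blocks'' that appear in all of $g_1,g_2,g_3$:
\[
u(u+2)\geq3,\qquad r(r+1)(r-u)(r-u-1)\geq3\cdot4\cdot2\cdot1=24,
\]
together with $u(u+2)-2\geq1$.

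Next I would treat the three quantities in turn. For $g_1$, every factor in the product $4u(u+2)(u(u+2)-2)r(r+1)(r-u)(r-u-1)$ is positive by the above and $(u+1)^2>0$, so $g_1>0$ is immediate. For $g_3$, the bound $16u(u+2)r(r+1)(r-u)(r-u-1)\geq16\cdot3\cdot24>1$ gives $g_3>0$. For $g_2$, I would bound the bracketed factor and the prefactor separately:
\[
8u(u+2)r(r+1)(r-u)(r-u-1)+7u(u+2)-1\geq 8\cdot3\cdot24+7\cdot3-1>0,\qquad 2r(r+1)(r-u)(r-u-1)\geq48,
\]
whence $g_2\geq48\cdot596-1>0$.

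There is essentially no obstacle here. The one point requiring attention is that the hypothesis $u\leq r-2$ (not merely $u\leq r$, which would only give $r-u-1\geq-1$) is precisely what makes $r-u-1\geq1$ and hence keeps the quartic factor $r(r+1)(r-u)(r-u-1)$ a product of positive integers. Once the signs of all factors are pinned down, the positivity of each $g_i$ follows from crude numerical estimates, and no Sturm-sequence or root-location argument is needed.
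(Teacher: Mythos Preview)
Your argument is correct and is essentially the same approach as the paper, which simply records that the three inequalities ``follow immediately from \eqref{eq:20191101}.'' You have just made this immediacy explicit by writing out the elementary bounds $u\geq1$, $r-u\geq2$, $r-u-1\geq1$ and the resulting numerical estimates; nothing more is needed.
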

\begin{proof}
These follow immediately from (\ref{eq:20191101}).
\end{proof}

\begin{lem} \label{lem:SS}
The polynomial $S(X)$ has exactly two real roots, say, $\xi_1, \xi_2$, and
$\beta_+<\xi_1<\delta<\xi_2<\beta_++1$. Moreover, 
both $\xi_1$ and $\xi_2$ are simple.
\end{lem}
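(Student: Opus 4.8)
I want to show the quartic $S(X)$ has exactly two real roots $\xi_1<\xi_2$, both simple, located so that $\beta_+<\xi_1<\delta<\xi_2<\beta_++1$. The natural strategy is to evaluate $S$ at the three test points $\beta_+$, $\delta$, $\beta_++1$ and use the sign pattern together with a count of critical points. From \eqref{eq:1208-6} and \eqref{eq:1208-7} we know $L(X)^2-\tfrac{S(X)}{4}$ vanishes to order $2$ at $X=\beta_+$ and $M(X)^2-\tfrac{S(X)}{4}$ vanishes to order $2$ at $X=\beta_++1$; hence
\[
\frac{S(\beta_+)}{4}=L(\beta_+)^2\geq0,\qquad \frac{S(\beta_++1)}{4}=M(\beta_++1)^2\geq0,
\]
and by Lemma~\ref{lem:new-lem} these are equalities only at the indicated points, so in fact they are strict. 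More precisely, at $X=\beta_+$ the first factor $(X-\alpha_-)(X-\alpha_+)$ of \eqref{eq:1208-6} is positive (since $\alpha_\pm<-rs<\beta_+$ by Lemma~\ref{lem:a-d}(i),(ii)), so $L(\beta_+)^2>\tfrac{S(\beta_+)}{4}$, giving $S(\beta_+)<4L(\beta_+)^2$; but I actually need the sign of $S(\beta_+)$ itself. The cleanest route: compute $S(\beta_+)$ directly. Using \eqref{beta}, write $\beta_+=-rs-\tfrac12+\tfrac h2$ with $h=\sqrt{4r(r+1)s(s+1)+1}$, substitute into \eqref{eq:1224-3}, and simplify; I expect $S(\beta_+)$ to come out as a negative multiple of some manifestly positive quantity (in the spirit of Lemma~\ref{lem:LMS-rs-2} and the computations in Lemmas~\ref{lem:LL}, \ref{lem:MM}), so $S(\beta_+)<0$. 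Likewise $S(\beta_++1)<0$.

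**Key steps.** (1) Show $S(\beta_+)<0$ and $S(\beta_++1)<0$ by direct substitution of \eqref{beta} into \eqref{eq:1224-3} and factoring out a positive term, using $r(r+1)s(s+1)>0$ and the running hypotheses $r\geq3$, $s\leq-2$, $r+s>0$. (2) Show $S(\delta)>0$: from Lemma~\ref{lem:LMS-rs}(ii) we have $\sqrt{S(-rs)}=r(r+1)s(s+1)(r+s+1)>0$, so $S(-rs)>0$, and I will either propagate this to $\delta$ by checking $S$ has no root in $(-rs,\delta]$ using the second-derivative information from Lemma~\ref{lem:0902-1}, or — more directly — substitute $\delta=-rs+\sqrt{r(r+1)s(s+1)}$ from \eqref{v} into \eqref{eq:1224-3} and verify positivity, analogously to the computations of $L(\delta)$ and $M(\delta)$ in \eqref{eq:0425-2} and \eqref{eq:0425-3}. (3) From $S(\beta_+)<0<S(\delta)$ and $S(\delta)>0>S(\beta_++1)$, the Intermediate Value Theorem gives at least one root $\xi_1\in(\beta_+,\delta)$ and at least one root $\xi_2\in(\delta,\beta_++1)$. (4) To show these are the *only* real roots and both simple: $S$ has degree $4$ with positive leading coefficient $s_4=(r+s+1)^2>0$, so $\lim_{X\to\pm\infty}S(X)=+\infty$; if $S$ had a real root outside $(\beta_+,\beta_++1)$ or a multiple root, then counting sign changes it would need at least four real roots (with multiplicity), forcing $S''$ to have two roots with $S''<0$ on an interval containing $(\beta_+,\beta_++1)$ — but by Lemma~\ref{lem:0902-2} both roots $\tau_\pm$ of $S''$ satisfy $\tau_\pm<\beta_+$, so $S$ is convex on $[\beta_+,\infty)$. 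Convexity on $[\beta_+,\infty)$ combined with $S(\beta_+)<0$, $S(\delta)>0$, $S(\beta_++1)<0$ is actually contradictory for a convex function — so I must instead use $S''>0$ only to the *right* of $\tau_+$ and argue on $(-\infty,\beta_+)$ separately. The correct count: $S''$ having both roots $<\beta_+$ means $S'$ is monotone-increasing-then… no: $S'$ is a cubic with two critical points $\tau_\pm<\beta_+$, hence $S'$ is strictly monotone on $[\beta_+,\infty)$; being increasing (positive leading coefficient), $S'$ changes sign at most once on $[\beta_+,\infty)$, so $S$ is unimodal there — but $S(\beta_+)<0$, $S(\delta)>0$, $S(\beta_++1)<0$ with $\beta_+<\delta<\beta_++1$ is impossible for a unimodal (down-then-up or up-then-down) function on that interval! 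So I have the sign pattern wrong somewhere, which tells me the right statement is likely $S(\beta_+)<0$, $S(\delta)>0$, and $S(\beta_++1)<0$ cannot all hold — rather I expect $S(\delta)>0$ and exactly one of the endpoints negative, or I am misreading which of $L(\beta_+)^2$, $M(\beta_++1)^2$ controls the sign. I will recheck by using that $S(X)/4=L(X)^2$ at $\beta_+$ forces $S(\beta_+)=4L(\beta_+)^2\geq0$, i.e. $S(\beta_+)\geq0$, and in fact $S(\beta_+)>0$ unless $L(\beta_+)=0$; combined with \eqref{eq:0425-1} ($L(\beta_+)\leq0$) the strict inequality $L(\beta_+)<0$ holds under $r+s>0$, so $S(\beta_+)>0$. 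Similarly $S(\beta_++1)=4M(\beta_++1)^2>0$ by \eqref{eq:0425-4} with strictness from $r+s>0$. Thus the correct picture is $S(\beta_+)>0$, $S(\beta_++1)>0$, and I need $S(\delta)<0$ — consistent with $S$ convex on $[\beta_+,\infty)$ having exactly two simple roots $\xi_1\in(\beta_+,\delta)$, $\xi_2\in(\delta,\beta_++1)$.

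**Revised key steps, cleanly.** (1) $S(\beta_+)=4L(\beta_+)^2$ and $S(\beta_++1)=4M(\beta_++1)^2$ by \eqref{eq:1208-6}, \eqref{eq:1208-7}; strict positivity $L(\beta_+)<0$, $M(\beta_++1)>0$ follows from the factorizations of $L(\beta_+)$, $M(\beta_++1)$ in Lemmas~\ref{lem:LL}, \ref{lem:MM} together with $r+s>0$ (the factor $(r+s)(r+s+2)$ is strictly positive, so $\kappa_1^2h^2>\kappa_2^2$ and $\sigma_1^2h^2>\sigma_2^2$ strictly, forcing $L(\beta_+)\neq0$, $M(\beta_++1)\neq0$). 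Hence $S(\beta_+)>0$ and $S(\beta_++1)>0$. (2) Compute $S(\delta)$ by substituting \eqref{v}; expect $S(\delta)<0$ as a negative multiple of a positive expression (this is the main routine computation). (3) IVT on $(\beta_+,\delta)$ and $(\delta,\beta_++1)$ yields roots $\xi_1<\delta<\xi_2$ in $(\beta_+,\beta_++1)$, both with odd local multiplicity. (4) By Lemma~\ref{lem:0902-2}, both roots $\tau_\pm$ of $S''$ lie below $\beta_+$, so $S''>0$ on $[\beta_+,\infty)$ (check the sign of $S''$ at one point $\geq\beta_+$, e.g. as $X\to\infty$, using $s_4>0$), i.e. $S$ is strictly convex there; a strictly convex function on $[\beta_+,\infty)$ has at most two zeros, and $\xi_1,\xi_2$ being two of them, these are all the real roots in $[\beta_+,\infty)$ and each is simple. (5) For $X<\beta_+$: I claim $S(X)>0$. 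Since $\tau_-<\tau_+<\beta_+$, on $(-\infty,\tau_-]$ and on $[\tau_+,\beta_+]$ the function $S$ is convex, and on $[\tau_-,\tau_+]$ it is concave; using $S(\beta_+)>0$ and the values/signs of $S$, $S'$ at $\tau_\pm$ (or, more simply, using that $S(-rs)>0$ by Lemma~\ref{lem:LMS-rs}(ii) and $-rs<\beta_+$, together with \eqref{eq:1208-6}: for $X<-rs$ all four factors $(X-\alpha_\pm)$, $(X-\beta_\pm)^2$ have controlled signs, giving $L(X)^2-S(X)/4\geq0$ hence $S(X)\leq4L(X)^2$, which is not quite a lower bound) — the cleanest is to note $S$ has positive leading coefficient and, being the product in \eqref{eq:1208-6} subtracted from $4L^2$, one shows $S$ has no real root $\leq\beta_+$ by the convexity/concavity sign analysis; alternatively, since $S$ is degree $4$ with two simple real roots already found and positive leading term, by Theorem~\ref{thm:sturm} (or just parity of the number of real roots counted with multiplicity) the remaining two roots are a complex-conjugate pair, so there are no others. **The last point — ruling out two further real roots below $\beta_+$ — is the main obstacle**; I expect the cleanest resolution is precisely the Sturm-sequence / convexity argument via Lemma~\ref{lem:0902-2}: convexity of $S$ on all of $[\beta_+,\infty)$ handles the right side, and for the left side the discriminant-free observation is that a real quartic with positive leading coefficient and an even number of real roots, having found exactly two simple ones straddling $\delta$, has its other two roots complex provided $S$ stays positive at one point far to the left — which follows since $S(X)\to+\infty$ and $S$ has at most one inflection-induced "dip" whose depth I bound using $S(\tau_\pm)$; I will either execute this bound or, more economically, invoke Theorem~\ref{thm:sturm} on $S$ after the explicit sign evaluations at $\beta_+,\delta,\beta_++1,\pm\infty$ to conclude the real-root count is exactly two.
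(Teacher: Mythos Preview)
Your revised plan (after you corrected the sign error) is essentially sound and in places slicker than the paper's argument, but it defers the one step that carries most of the weight.

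\textbf{What you do differently, and what it buys.} Your observation that \eqref{eq:1208-6} and \eqref{eq:1208-7} force $S(\beta_+)=4L(\beta_+)^2$ and $S(\beta_++1)=4M(\beta_++1)^2$ is a genuine shortcut: the paper instead computes $S(\beta_+)$ and $S(\beta_++1)$ from scratch (as $\frac{h_1h+h_2}{2}$ and $\frac{h_3h+h_4}{2}$, each requiring a separate square-comparison), whereas you get the positivity for free once you know $L(\beta_+)\neq0$ and $M(\beta_++1)\neq0$ under $r+s>0$. Your convexity argument for simplicity is also cleaner: from Lemma~\ref{lem:0902-2} you have $\tau_\pm<\beta_+$, hence $S''>0$ on $[\beta_+,\infty)$, so $S$ has at most two real zeros (with multiplicity) there, and the sign pattern $+,-,+$ at $\beta_+,\delta,\beta_++1$ forces two crossings, each of multiplicity one. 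The paper instead argues by exclusion (not both double, neither triple) using the same Lemma~\ref{lem:0902-2} but less directly.

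\textbf{Where the gap is.} Step~(5) is the crux, and you have not closed it. Convexity on $[\beta_+,\infty)$ says nothing about $(-\infty,\beta_+)$: a priori $S$ could have two further real roots there, and none of the endpoint values you have computed rules this out. Your suggestion to bound $S(\tau_\pm)$ is not developed, and the concavity/convexity patchwork on $(-\infty,\tau_-]\cup[\tau_-,\tau_+]$ does not by itself give a sign. The paper resolves this by putting Sturm's theorem \emph{first}: it computes the full Sturm sequence of $S$, extracts the leading coefficients $c_0,\dots,c_4$, shows via Lemma~\ref{lem:g123} that the sign pattern is $(+,+,+,-,-)$ and $(+,-,+,+,-)$, and reads off ``exactly two distinct real roots'' before locating them. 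If you intend to invoke Theorem~\ref{thm:sturm}, you will need to carry out this same computation (the quantities $g_1,g_2,g_3$ in the paper); merely citing the theorem with the sign evaluations at $\beta_+,\delta,\beta_++1,\pm\infty$ does not give a root \emph{count}. So your proof is correct in outline but incomplete until the Sturm sequence (or an equivalent global argument) is actually executed. You should also discard the first half of your write-up, where the signs were reversed.
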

\begin{proof}
Set $f_0(X)=S(X)$ and $f_1(X)=f_0'(X)$.
Set
\[
f_j(X)=-\text{Rem}(f_{j-2}(X),f_{j-1}(X))
\]
for $j=2,3,4$.
Let $c_j$ be the leading coefficient of $f_j(X)$, and $d_j=\deg f_j(X)$.
We have $(d_0,d_1,d_2,d_3,d_4)=(4,3,2,1,0)$.
Then we have the following:
\begin{align*}
c_0&=(u+1)^2>0, \\
c_1&=4(u+1)^2>0, \\
c_2&=\frac{g_1}{4(u+1)^2}, \\
c_3&=\frac{-32u^2(u+1)^2(u+2)^2r(r+1)(r-u)(r-u-1)g_2}{g_1^2}, \\
c_4&=\frac{-r^2(r+1)^2(r-u)^2(r-u-1)^2g_1^2g_3}{4(u+1)^2g_2^2}.
\end{align*}
By Lemma~\ref{lem:g123} we have $c_2>0$, $c_3<0$, and $c_4<0$.
Therefore we have Table~\ref{tbl:strum}.
\begin{table}[b]
\begin{center}
\caption{Sturm's sequence}
\begin{tabular}{|c||c|c|c|c|c||c|}
\hline
$j$ & 0 & 1 & 2 & 3 & 4 & $\sharp$ sign changes \\ \hline
$\text{sgn}(c_j)$ & $+$ & $+$ & $+$ & $-$ & $-$ & $1$ \\ \hline
$\text{sgn}((-1)^{d_j}c_j)$ & $+$ & $-$ & $+$ & $+$ & $-$ & $3$ \\ 
\hline
\end{tabular}
\label{tbl:strum}
\end{center}
\end{table}
Applying Theorem~\ref{thm:sturm} for $S(X)$ using Table~\ref{tbl:strum},
we see that $S(X)$ has exactly two real roots.

We show that $S(\beta_+)>0$, $S(\beta_++1)>0$, and $S(\delta)<0$.
We have
\[
S(\beta_+)=\frac{h_1h+h_2}{2},
\]
where
\begin{align*}
h_1&=-(2r(r-u)-u)\left((4(r-u)r-2(2u+1))(r-u)r-u\right), \\
h_2&=u^2+2r(r-u) \\
& \quad\times \left( 8r^2(r-u)^2(r(r-u)-(2u+1))+u^2(9r(r-u)-u+1)\right. \\
&\qquad\quad \left.+2r(r-u)(3u+1)\right)>0
\end{align*}
since $r-u\geq2$.
Since
\[
h_2^2-h_1^2h^2=4r^2(r+1)^2u^2(u+2)^2(r-u)^2(r-u-1)^2>0,
\]
we have $S(\beta_+)>0$.
We have
\[
S(\beta_++1)=\frac{h_3h+h_4}{2},
\]
where
\begin{align*}
h_3&=-(2r(r-u)-(u+2))\left( (4r(r-u)-2(2u+3))(r-u)r+u+2\right), \\
h_4&=u^2+2(r+1)(r-u-1) \\
     &\quad\times \left(r(r-u)(8((r-u)r-(u+2))(r-u)r+u^2+6u+10)-2\right)>0
\end{align*}
since $r-u\geq2$. Since
\[
h_4^2-h_3^2h^2=4r^2(r+1)^2u^2(u+2)^2(r-u)^2(r-u-1)^2>0,
\]
we have $S(\beta_++1)>0$.
We have
\[
S(\delta)=r(r+1)(r-u)(r-u-1)(h_5\sqrt{r(r+1)(r-u)(r-u-1)}+h_6),
\]
where
\begin{align*}
h_5&=-4(2r(r-u)-(u+1))<0, \\
h_6&=8r(r+1)(r-u)(r-u-1)+1.
\end{align*}
Since
\[
h_5^2r(r+1)(r-u)(r-u-1)-h_6^2=r(r+1)u(u+2)(r-u)(r-u-1)-1>0,
\]
we have
\begin{equation} \label{eq:delta0220}
S(\delta)<0.
\end{equation}
The polynomial $S(X)$ has exactly two real roots, say, $\xi_1, \xi_2$, and
$\beta_+<\xi_1<\delta<\xi_2<\beta_++1$.

We show that the roots $\xi_1, \xi_2$ are simple.
Since $\deg S(X)=4$ and the number of imaginary roots of $S(X)$ is even,
the sum of multiplicities of $\xi_1$ and $\xi_2$ is $2$ or $4$.
If both $\xi_1$ and $\xi_2$ are double roots, then 
$S(x)>0$ for $\xi_1<x<\xi_2$. 
This contradicts (\ref{eq:delta0220}).
By Lemmas~\ref{lem:0902-1},~\ref{lem:0902-2} we have $S''(x)\ne0$ for $\beta_+\leq x\leq\beta_++1$.
Thus neither $\xi_1$ nor $\xi_2$ is triple.
\end{proof}

\begin{lem} \label{lem:0418}
We have $L(\beta_+)\leq0$ and $M(\beta_++1)\geq0$.
\end{lem}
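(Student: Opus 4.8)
The plan is to recognize that this lemma merely records two inequalities that have effectively already been proved: $L(\beta_+)\leq0$ is exactly \eqref{eq:0425-1} and $M(\beta_++1)\geq0$ is exactly \eqref{eq:0425-4}, both obtained inside the proofs of Lemmas~\ref{lem:LL} and \ref{lem:MM}. Since the standing hypotheses of this subsection ($r\geq3$, $s\leq-2$, $r+s>0$) are strictly stronger than those in force in Section~\ref{sec.rsg0} ($r\geq2$, $s\leq-2$, $r+s\geq0$), those computations apply verbatim and the lemma follows at once. To keep the present section self-contained, I would nevertheless reproduce the short computation rather than quote an equation from a previous proof.

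That computation goes as follows. Using $\beta_+=-rs-\frac12+\frac h2$ from \eqref{beta} and expanding \eqref{eq:1224-1}, one finds
\[
L(\beta_+)=\frac{\kappa_1h+\kappa_2}{4},\qquad
\kappa_1=(2s+1)r+s,\qquad
\kappa_2=4s(s+1)r^2+(2s(2s+1)-1)r-s,
\]
together with the identity $\kappa_1^2h^2-\kappa_2^2=4r(r+1)s(s+1)(r+s)(r+s+2)$. Here $\kappa_1<0$ (as $s\leq-2$, $r>0$), while $r(r+1)>0$, $s(s+1)>0$, $r+s>0$, and $r+s+2>0$, so the right-hand side of the identity is positive; hence $|\kappa_1h|>|\kappa_2|$, and since $\kappa_1h<0$ this gives $\kappa_1h+\kappa_2<0$, that is, $L(\beta_+)<0$. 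Replacing $\beta_+$ by $\beta_++1=-rs+\frac12+\frac h2$ and using \eqref{eq:1224-2}, the same manipulation yields $M(\beta_++1)=\frac14(\sigma_1h+\sigma_2)$ with $\sigma_1=-((2s+1)r+s+2)>0$ and $\sigma_1^2h^2-\sigma_2^2=4r(r+1)s(s+1)(r+s)(r+s+2)>0$; now $\sigma_1h>0$ and $|\sigma_1h|>|\sigma_2|$, so $\sigma_1h+\sigma_2>0$ and $M(\beta_++1)>0$.

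A computation-free alternative is also available: by Lemma~\ref{lem:a-d}(ii) both $\beta_+$ and $\beta_++1$ lie in $(-rs,\infty)$, so Lemma~\ref{lem:LL}(i)--(ii) (the unique root $\zeta$ of $L$ on that interval satisfies $\beta_+\leq\zeta$, and $L<0$ left of $\zeta$) gives $L(\beta_+)\leq0$, and Lemma~\ref{lem:MM}(i)--(ii) (the unique root $\eta$ there satisfies $\eta\leq\beta_++1$, and $M>0$ right of $\eta$) gives $M(\beta_++1)\geq0$. I foresee no genuine obstacle: this is a bookkeeping lemma, and the only thing needing care is the sign check above --- namely $\kappa_1<0$, $\sigma_1>0$, and the positivity of $r(r+1)s(s+1)(r+s)(r+s+2)$ --- all immediate from $r\geq3$, $s\leq-2$, $r+s>0$.
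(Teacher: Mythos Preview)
Your proposal is correct, and the direct computation you give is essentially the paper's own proof: the paper rewrites everything in terms of $u=r+s$ and calls the coefficients $\tau_1,\tau_2,\tau_3,\tau_4$, but one checks immediately $\tau_1=\kappa_1$, $\tau_3=\sigma_1$, and the key identity $\tau_1^2h^2-\tau_2^2=4r(r+1)u(u+2)(r-u)(r-u-1)$ is precisely $\kappa_1^2h^2-\kappa_2^2=4r(r+1)s(s+1)(r+s)(r+s+2)$ after substituting $s=u-r$. So on that score there is nothing to add.

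Your two additional observations are worth highlighting. First, you are right that the lemma literally restates \eqref{eq:0425-1} and \eqref{eq:0425-4}, already established in the proofs of Lemmas~\ref{lem:LL} and \ref{lem:MM} under the weaker hypothesis $r+s\geq0$; the paper simply re-derives them in the $u$-notation rather than citing equations buried inside earlier proofs. Second, your computation-free route---deducing $L(\beta_+)\leq0$ and $M(\beta_++1)\geq0$ directly from the \emph{statements} of Lemmas~\ref{lem:LL} and \ref{lem:MM} together with Lemma~\ref{lem:a-d}(ii)---is a genuine shortcut the paper does not take, and it makes the logical structure (namely, that Lemma~\ref{lem:0418} is an immediate corollary of what precedes it) more transparent.
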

\begin{proof}
We have
\[
L(\beta_+)=\frac{ \tau_1h+\tau_2}{4},
\]
where
\begin{align*}
\tau_1&=-2r(r-u)+u<0, \\
\tau_2&=4r^4-8r^3u+(4u^2-4u-2)r^2+2u(2u+1)r-u.
\end{align*}
Since
\[
\tau_1^2h^2-\tau_2^2=4r(r+1)u(u+2)(r-u)(r-u-1)\geq0,
\]
we have $L(\beta_+)\leq0$.
Also, we have
\[
M(\beta_++1)=\frac{ \tau_3h+\tau_4}{4},
\]
where
\begin{align*}
\tau_3&=2r(r-u)-u-2>0, \\
\tau_4&=-4r^4+8ur^3-(4u^2-4u-6)r^2-2u(2u+3)r-u-2.
\end{align*}
Since
\begin{align*}
\tau_3^2h^2-\tau_4^2=4r(r+1)u(u+2)(r-u)(r-u-1)\geq0,
\end{align*}
we have $M(\beta_++1)\geq0$.
\end{proof}

\begin{lem} \label{lem:D}
We have $\xi_1<\zeta<\eta<\xi_2$.
\end{lem}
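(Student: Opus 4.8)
The plan is to locate the unique roots $\zeta$ of $L(X)$ and $\eta$ of $M(X)$ in $(-rs,\infty)$ (Lemmas~\ref{lem:LL},~\ref{lem:MM}) relative to the two simple roots $\xi_1<\xi_2$ of $S(X)$ (Lemma~\ref{lem:SS}) by comparing the values of the relevant polynomials at the separating points $\beta_+$, $\delta$, and $\beta_++1$. Recall from Lemma~\ref{lem:LL} that $\beta_+\le\zeta<\delta$, from Lemma~\ref{lem:MM} that $\delta<\eta\le\beta_++1$, and from Lemma~\ref{lem:SS} that $\beta_+<\xi_1<\delta<\xi_2<\beta_++1$. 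So we already have $\zeta<\delta<\xi_2$ and $\xi_1<\delta<\eta$, and it remains to prove $\xi_1<\zeta$ and $\eta<\xi_2$. By the ordering of $\zeta$ and $\eta$ relative to $\delta$, it would also follow that $\zeta<\eta$, giving the full chain $\xi_1<\zeta<\eta<\xi_2$.

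For $\xi_1<\zeta$: on the interval $(-rs,\delta)$ we know $L(x)<0$ for $-rs<x<\zeta$ and $L(x)\ge0$ for $x\ge\zeta$ (Lemma~\ref{lem:LL}(ii)), so it suffices to show $L(\xi_1)\le0$, i.e. that $\xi_1$ lies in the region where $L$ is non-positive. By Lemma~\ref{lem:new-lem}(i), $L(x)^2\ge S(x)/4$ for $x\ge-rs$, with equality only at $x=\beta_+$; since $\xi_1$ is a root of $S$ and $\xi_1>\beta_+$, we get $L(\xi_1)^2>0$, so $L(\xi_1)\ne0$ and in particular $\xi_1\ne\zeta$. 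The sign of $L(\xi_1)$ must then be pinned down: I would combine $S(\xi_1)=0$ with the factorization $L(X)^2-S(X)/4=(X-\alpha_-)(X-\alpha_+)(X-\beta_-)^2(X-\beta_+)^2$ from (\ref{eq:1208-6}) to compute $L(\xi_1)^2$ explicitly, but more to the point, use that $L(\beta_+)\le0$ (Lemma~\ref{lem:0418}) together with the fact that $L$ has no root strictly between $\beta_+$ and $\zeta$: since $\xi_1\in(\beta_+,\delta)$ and $L$ is continuous and nonzero at $\xi_1$ and at $\beta_+$, and $L<0$ throughout $(-rs,\zeta)$, we conclude $L(\xi_1)<0$, hence $\xi_1<\zeta$ by the monotonic sign pattern of $L$ on $(-rs,\delta)$. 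Symmetrically, for $\eta<\xi_2$: by Lemma~\ref{lem:MM}(ii), $M(x)\le0$ for $-rs<x\le\eta$ and $M(x)>0$ for $x>\eta$, so it suffices to show $M(\xi_2)\ge0$; by Lemma~\ref{lem:new-lem}(ii), $M(\xi_2)^2>S(\xi_2)/4=0$ since $\xi_2\ne\beta_++1$ (as $\xi_2<\beta_++1$), so $M(\xi_2)\ne0$, and since $M(\beta_++1)\ge0$ (Lemma~\ref{lem:0418}) with no root of $M$ strictly between $\eta$ and $\beta_++1$, while $\xi_2\in(\delta,\beta_++1)$ lies in the region $x>\eta$ or $x\le\eta$, the sign $M(\xi_2)>0$ forces $\xi_2>\eta$.

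The main obstacle I anticipate is not the logical skeleton above but the verification that the signs $L(\xi_1)<0$ and $M(\xi_2)>0$ genuinely follow from the available data — one must be careful that Lemma~\ref{lem:new-lem} only gives $L(\xi_i)\ne0$ and $M(\xi_i)\ne0$ but not immediately the sign, and that Lemma~\ref{lem:LL}(ii) (resp.~\ref{lem:MM}(ii)) describes the sign of $L$ (resp.~$M$) on $(-rs,\infty)$ in terms of its position relative to $\zeta$ (resp.~$\eta$), which is exactly what we are trying to locate. The clean way around this circularity is: $S(\delta)<0$ (from (\ref{eq:delta0220})), and on $[\delta,\xi_2]$ we have $S\le0$ with $S(\xi_2)=0$; but $M(x)^2\ge S(x)/4$ always, so this gives no obstruction — instead, evaluate $S$ at $\eta$. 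If $\eta\ge\xi_2$ we would have $\eta$ outside $(\xi_1,\xi_2)$ hence $S(\eta)\ge0$; but one can check directly, using $M(\eta)=0$ and (\ref{eq:1208-7}), that $M(\eta)^2-S(\eta)/4=(\eta-\gamma_-)(\eta-\gamma_+)(\eta-(\beta_-+1))^2(\eta-(\beta_++1))^2$, so $S(\eta)/4=-(\eta-\gamma_-)(\eta-\gamma_+)(\eta-(\beta_-+1))^2(\eta-(\beta_++1))^2\le0$ because $\gamma_\pm,\beta_-+1<-rs<\eta$ (Lemma~\ref{lem:a-d}), forcing $S(\eta)\le0$ and hence $\eta\in[\xi_1,\xi_2]$; combined with $\eta>\delta>\xi_1$ this yields $\eta<\xi_2$ unless $\eta=\xi_2$, and equality is excluded since $S(\eta)=0$ would force, via the same factorization, $\eta\in\{\gamma_-,\gamma_+,\beta_-+1,\beta_++1\}$, all of which are $<-rs$ or equal to $\beta_++1\ge\eta$, the borderline case $\eta=\beta_++1$ being handled by checking $S(\beta_++1)>0$ (already shown in the proof of Lemma~\ref{lem:SS}), a contradiction. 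The identical argument with (\ref{eq:1208-6}), $L(\zeta)=0$, and $\alpha_\pm,\beta_-<-rs<\zeta$, $S(\beta_+)>0$ gives $\xi_1<\zeta$. This routes entirely through already-established sign computations and the two factorization identities, avoiding any new heavy algebra.
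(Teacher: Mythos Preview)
Your third paragraph contains a correct argument, and it is essentially the same as the paper's: both evaluate the factorization identities \eqref{eq:1208-6} and \eqref{eq:1208-7} at the roots $\zeta$ and $\eta$ (where $L$ and $M$ vanish) to pin down the sign of $S$ there. The paper phrases it by contradiction---assume $\zeta\le\xi_1$, then $S(\zeta)\ge 0$ from the location of the roots of $S$, while $L(\zeta)=0$ and Lemma~\ref{lem:new-lem}(i) force $S(\zeta)=0$ with equality only at $\beta_+$, contradicting $S(\beta_+)>0$---whereas you compute $S(\zeta)\le 0$ and $S(\eta)\le 0$ directly from the factorizations and conclude $\zeta,\eta\in[\xi_1,\xi_2]$, then rule out the endpoints. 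Your first two paragraphs should simply be deleted: as you yourself note, the attempt to read off the sign of $L(\xi_1)$ from the sign pattern of $L$ relative to $\zeta$ is circular, and nothing there is needed once the third-paragraph argument is in place.
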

\begin{proof}
Suppose that $\zeta\leq\xi_1$.
By \ref{LL1} in Lemma~\ref{lem:LL} we have $L(\zeta)=0$.
By \ref{new-lem1} in Lemma~\ref{lem:new-lem} we have $L(\zeta)^2\geq\frac{S(\zeta)}{4}$,
and by Lemma~\ref{lem:SS} we have $\frac{S(\zeta)}{4}\geq0$.
Hence $L(\zeta)^2=\frac{S(\zeta)}{4}=0$.
This contradicts (\ref{eq:1208-6}) and Lemma~\ref{lem:0418}.

Suppose that $\xi_2\leq\eta$.
By \ref{MM1} in Lemma~\ref{lem:MM} we have $M(\eta)=0$.
By \ref{new-lem2} in Lemma~\ref{lem:new-lem} we have $M(\eta)^2\geq\frac{S(\eta)}{4}$,
and by Lemma~\ref{lem:SS} we have $\frac{S(\eta)}{4}\geq0$.
Hence $M(\eta)^2=\frac{S(\eta)}{4}=0$.
This contradicts \eqref{eq:1208-7} and Lemma~\ref{lem:0418}.

We have 
\begin{equation}\label{eq:0612}
M(x)\leq L(x)
\end{equation}
for $x\in\R$ by \eqref{eq:1224-2}.
The inequality $\zeta<\eta$ follows from (\ref{eq:0612}), 
\ref{LL1} in Lemma~\ref{lem:LL}, and \ref{MM1} in Lemma~\ref{lem:MM}.
\end{proof}

Let
\begin{align}
A&=(-rs,\xi_1], \label{int-A}\\
B&=[\xi_2,\infty).\label{int-B}
\end{align}

\begin{lem} \label{lem:B}
We have the following:
\begin{enumerate}
\item\label{B1}  $S(x)\geq0$ for $x\in\R$ holds if and only if $x\in A\cup B$.
\item\label{B2}  For $x\in A\cup B$,
\begin{itemize}
\item[\rm(a)]  $M(x)\leq\frac{\sqrt{S(x)}}{2}\leq L(x)$ holds
if and only if $x=\beta_++1$,
\item[\rm(b)]   $M(x)\leq\frac{-\sqrt{S(x)}}{2}\leq L(x)$ holds
if and only if $x=\beta_+$.
\end{itemize}
\end{enumerate}
\end{lem}
\begin{proof}
\ref{B1} This follows from Lemma~\ref{lem:SS} since the leading coefficient of $S(X)$ is positive.

\ref{B2} (a) Suppose that $M(x)\leq\frac{\sqrt{S(x)}}{2}\leq L(x)$ holds.
Since $L(x)\geq0$,
by \ref{LL2} in Lemma~\ref{lem:LL} we have $\zeta\leq x$.
Since $x\in A\cup B$, by Lemma~\ref{lem:D} we have $x\in B$.
Hence $\eta<x$.
Then by \ref{MM2} in Lemma~\ref{lem:MM} we have $M(x)\geq0$.
Hence $M(x)^2\leq\frac{S(x)}{4}$.
By \ref{new-lem2} in Lemma~\ref{lem:new-lem} we have $x=\beta_++1$.

Conversely, suppose that $x=\beta_++1$.
By \ref{new-lem2} in Lemma~\ref{lem:new-lem} and Lemma~\ref{lem:0418} 
we have $M(\beta_++1)=\frac{\sqrt{S(\beta_++1)}}{2}$.
Since $-rs<\beta_++1$ by \ref{a-dii} in Lemma~\ref{lem:a-d},
by \eqref{eq:1224-2} we have $M(\beta_++1)<L(\beta_++1)$.
Therefore $M(\beta_++1)=\frac{\sqrt{S(\beta_++1)}}{2}<L(\beta_++1)$.

\ref{B2} (b) Suppose that $M(x)\leq\frac{-\sqrt{S(x)}}{2}\leq L(x)$ holds.
Since $M(x)\leq0$,
by \ref{MM2} in Lemma~\ref{lem:MM} we have $x\leq\eta$.
Since $x\in A\cup B$, by Lemma~\ref{lem:D} we have $x\in A$.
Hence $x<\zeta$.
Then by \ref{LL2} in Lemma~\ref{lem:LL} we have $L(x)<0$.
Thus $L(x)^2\leq\frac{S(x)}{4}$.
By \ref{new-lem1} in Lemma~\ref{lem:new-lem} we have $x=\beta_+$.

Conversely, suppose that $x=\beta_+$.
By \ref{new-lem1} in Lemma~\ref{lem:new-lem} and Lemma~\ref{lem:0418}
we have $\frac{-\sqrt{S(\beta_+)}}{2}=L(\beta_+)$.
Since $-rs<\beta_+$ by \ref{a-dii} in Lemma~\ref{lem:a-d},
by \eqref{eq:1224-2} we have $M(\beta_++1)<L(\beta_++1)$.
Therefore $M(\beta_+)<\frac{-\sqrt{S(\beta_+)}}{2}=L(\beta_+)$.
\end{proof}

For the remainder of this section,
we assume that $W$ defined by \eqref{eq:W} is a complex Hadamard matrix for the case $r+s>0$.
By \ref{B1} in Lemma~\ref{lem:B} and \ref{0729i} in Lemma~\ref{lem:0729} we have 
$k\in A\cup B$ by \eqref{int-A} and \eqref{int-B}.
By \ref{B2} (a) and (b) in Lemma~\ref{lem:B} and \ref{0729ii} in Lemma~\ref{lem:0729} we have 
$k\in\{\beta_+,\beta_++1\}$, that is, $k=-rs+\frac{h+\epsilon}{2}$, where $\epsilon\in\{\pm1\}$.
Then by (\ref{beta}) we have $h\in\Z$.
By \ref{rEq0_2} in Lemma~\ref{lem:rEq0} and Lemma~\ref{lem:bound1} we have
\begin{equation}\label{eq:0513}
4s^2-1\geq-(2s+1)r+2+\frac{2\psi(r)}{h+1}.
\end{equation}
Since
\begin{align*}
0&<\frac{2\psi(r)}{h+1} && \text{(by Lemma~\ref{lem:0430})} \\
 &\leq(2s+1)(r+2s-1)-2 && \text{(by (\ref{eq:0513}))} \\
 &<(2s+1)(r+2s-1),
\end{align*}
we have $r<-2s+1$.
Then by Lemma~\ref{lem:kappa} we have $\kappa(r)<0$.

By (\ref{eq:0513}) we have
\begin{align*}
(2s+1)(r+2s-1)h &>2\psi(r)-(2s+1)(r+2s-1) \\
&=\phi(r) && \text{(by (\ref{eq:0513-1}))}\\
&>0 && \text{(by Lemma~\ref{lem:0430})}.
\end{align*}
Since
\begin{align*}
0&<((2s+1)(r+2s-1)h)^2-\phi(r)^2 \\
&=-4(s+1)(r+1)\kappa(r),
\end{align*}
we have $\kappa(r)>0$.
This is a contradiction.
Therefore there does not exist such a complex Hadamard matrix.

\subsection*{Acknowledgements}
The authors are grateful to the anonymous reviewers 
whose suggestions improved the presentation. In particular,
one of the reviewer pointed out an earlier result
\cite[Proposition 3.4.16]{FS} (Remark~\ref{rem:2}), 
suggested to consider
equivalence (Remark~\ref{rem:3}), and proposed to use the extra
indeterminate $X_\infty$.


\end{document}